\newcommand{\R}{\mathbb{R}}
\newcommand{\N}{\mathbb{N}}
\newcommand{\cC}{\mathcal{C}}
\newcommand{\cJ}{\mathcal{J}}
\newcommand{\cL}{\mathcal{L}}
\newcommand{\cP}{\mathcal{P}}
\newcommand{\oP}{\overline{\cP}}
\newcommand{\be}{\begin{equation}}
\newcommand{\ee}{\end{equation}}
\newcommand{\inn}[2]{{\langle #1,#2 \rangle}}
\newcommand{\norm}[1]{\|#1\|}
\newcommand{\wt}[1]{\widetilde{#1}}
\newcommand{\wh}[1]{\widehat{#1}}
\newcommand{\fc}{*_T}
\newcommand{\Y}{\mathbb{Y}}
\begin{document}

\markboth{Navascu\'es, Massopust}{Fractal Convolution}

%
%

\title{FRACTAL CONVOLUTION: \\
A NEW OPERATION BETWEEN FUNCTIONS}

\author{MAR\'IA A. NAVASCU\'ES}

\address{Department of Applied Mathematics, University of Zaragoza,\\
 C/ Mar\'ia de Luna 3, Zaragoza 50018, Spain\\
manavas@unizar.es}

\author{PETER MASSOPUST}

\address{Zentrum Mathematik, Technische Universit\"at M\"unchen,\\
Boltzmannstrasse 3, 85748 Garching b. M\"unchen, Germany\\
massopust@ma.tum.de}

\maketitle


\noindent
This paper is now published (in revised form) in Fract. Calc. Appl. Anal. Vol. 22, No 3 (2019), pp. 619--643,  DOI: 10.1515/fca-2019-0035, and is available online at http://www.degruyter.com/view/j/fca
\vspace*{20pt}
\begin{abstract}
In this paper we define an internal binary operation between functions called in the text \emph{fractal convolution}, that applies a pair of mappings into a fractal function. This is done by means of a suitable Iterated Function System. We study in detail the operation in $\mathcal{L}^p$ spaces and in sets of continuous functions, in a different way to previous works of the authors. We develop some properties of the operation and its associated sets. The lateral convolutions with the null function provide linear operators whose characteristics are explored. The last part of the article deals with the construction of convolved fractals bases and frames in Banach and Hilbert spaces of functions.
\end{abstract}

\keywords{Functional Spaces; Fractals; Frames.}

\ccode{Mathematics Subject Classification 2000: 46A35, 28A80, 58C05}

\section{Introduction}\label{sec:introduction}

As part of his work, B. Mandelbrot defined stochastic fractal and fractional functions, opening the way to new scenarios to model and process complex experimental signals and to novel theoretical and mathematical tools. (See, for instance, \cite{1} and \cite{2}.) Later, several fractal maps were constructed by means of affinities whose successive application to sets in the Euclidean plane led to fractal curves. There exists an extensive bibliography on this topic (see, for instance, the albeit incomplete list of references \cite{3, 4, 5, 6, 7, 8, 9}) although the concept is already present in the work of Mandelbrot.

These initial constructions were later extended to more general iterating systems involving nonlinear mappings in the Euclidean plane. In previous references (for instance, \cite{10}, \cite{11}), we defined fractal maps that are close to standard classical functions such as polynomials and splines. They may share or preserve some properties of the latter or display new characteristics such as non-smoothness or quasi-random behavior. These fractal maps are defined by means of iterated function systems (IFS) which involve a certain combination of two functions (aside from other elements such as scale factors) that gives rise to a new class of fractal functions \cite{12}. In this way, a wide spectrum of fractal functions is constructed that may have properties such as integrability, nowhere continuity, or smoothness with a prefixed order of regularity. If the scale functions are chosen accordingly, these fractal functions are close to the original standard function mimicking their behavior.

In this paper, we resume a  binary operation between functions, called \emph{fractal convolution} in the reference \cite{12}, which maps a pair of functions to a fractal function exploiting the latter's dependence on what are called seed and base functions. We develop some properties of this binary operation which is -- unlike ordinary convolution -- asymmetric, i.e., non-commutative. Fractal convolution, as we show, possesses great versatility when applied in Banach and Hilbert spaces of functions.

The outline of this paper is as follows. In Section 2, fractal convolution is defined on very general spaces of functions. Section 3 deals with the corresponding product operator and investigates some properties of fractal convolution. Section 4 studies the special case of continuous maps, however, in a very different way than the previous work of the authors \cite{11}. In Section 5, sets of convolution are introduced and in Sections 6 and 7, we apply fractal convolution to the construction of bases and frames in Banach and Hilbert spaces of functions.

\section{A Fractal-type Operation of Fractal Functions}

Throughout this article, we use the symbol $\N_N :=\{1, \ldots, N\}$ to denote the initial segment of length $N$ of $\N$, $N\geq 2$.

Let $X$ be a nonempty set and suppose $\{L_n : X\to X : n \in \N_N\}$ is a family of bijections with the property that $\{L_n(X) : n \in\N_N\}$ forms a
partition $\Delta$ of $X$, i.e., \begin{equation}\label{p} X = \bigcup_{n=1}^N L_n (X), \quad  L_n(X)\cap L_m(X) = \emptyset,\quad\forall n\neq m\in \N_N. \end{equation}
\noindent Now suppose that $(\Y,d_\Y)$ is a complete metric space with metric $d_\Y$. Recall that a mapping $f:X\to \Y$ is called bounded (with respect
to the metric $d_\Y$) if there exists an $M> 0$ so that for all $x_1, x_2\in X$, $d_\Y(f(x_1),f(x_2)) < M$.
Denote by $B(X, \Y)$ the set
\[
B(X, \Y) := \{f : X\to \Y : \quad f \quad \textrm{bounded}\}.
\]
Endowed with the metric
\[
d(f,g): = \displaystyle{\sup_{x\in X}} \,d_\Y(f(x), g(x)),
\]
$(B(X, \Y), d)$ becomes a complete metric space.

Under the usual addition and scalar multiplication of functions, the space $B(X,\Y)$ becomes a metric linear space \cite{13}. Recall that a \emph{metric
linear space} is a vector space endowed with a metric under which the operations of vector addition and scalar multiplication become continuous.

For $n \in \N_N$, let $v_n: X\times \Y \to \Y$ be a mapping that is uniformly contractive in the second variable, i.e., there exists an $\ell\in [0,1)$
so that for all $y_1, y_2\in \Y$ \be\label{scon} d_\Y (v_n(x, y_1), v_n(x, y_2)) \leq \ell\, d_\Y (y_1, y_2), \quad\forall x\in X. \ee Define an
operator $T: B(X,\Y)\to \Y^{X}$ by \be\label{RB} T g (x) := \sum_{n=1}^N v_n (L_n^{-1} (x), g \circ L_n^{-1} (x))\,\chi_{L_n(X)}(x), \ee where
$\chi_M$ denotes the characteristic function of a set $M$. Note that $T$ is well-defined and since $f$ is bounded and each $v_n$ contractive in its second variable, $T f\in B(X,\Y)$. The operator $T$ is usually referred to as a Read--Bajraktarevi\'c (RB) operator. (See \cite{14, 15}).

Moreover, by (\ref{scon}), we obtain for all $g,h \in B(X, \Y)$ the following inequality
\begin{eqnarray}\label{estim}
d(T g, T h) & =  & \sup_{x\in X} d_\Y (T g (x), T h (x))\nonumber\\
& = & \sup_{x\in X} d_\Y (v(L_n^{-1} (x), g(L_n^{-1} (x))), v(L_n^{-1} (x), h(L_n^{-1} (x))))\nonumber\\
& \leq & \ell \sup_{x\in X} d_\Y (g\circ L_n^{-1} (x), h \circ L_n^{-1} (x)) \leq \ell\, d_\Y(g,h).\nonumber
\end{eqnarray}
(To simplify notation, we set $v(x,y):= \sum\limits_{n=1}^N v_n (x, y)\,\chi_{L_n(X)}(x)$ in the above equation.) In other words, $T$ is a
contraction on the complete metric space $B(X,\Y)$ and, by the Banach Fixed Point Theorem, has therefore a unique fixed point $\wt{f}$ in $B(X,\Y)$.
This unique fixed point is called a \emph{fractal function} or a \emph{function attractor of $T$}.

Next, we would like to consider a special choice of mappings $v_n$. To this end, we require the concept of an $F$-space. We recall that a metric
$d:\Y\times\Y\to \R$ is called \emph{complete} if every Cauchy sequence in $\Y$ converges with respect to $d$ to a point of $\Y$ and
\emph{translation-invariant} if $d(x+a,y+a) = d(x,y)$, for all $x,y,a\in \Y$.

\begin{definition}[\cite{13}]
A topological vector space $\Y$ is called an \emph{${F}$-space}  if its topology is induced by a complete translation-invariant metric $d$.
\end{definition}

Now suppose that $\Y$ is an $F$-space. Denote its metric by $d_\Y$. We define mappings $v_n:X\times\Y\to \Y$ by \be\label{specialv} v_n (x,y) := q_n
(x) + \alpha_n (x) \,y,\quad n \in \N_N, \ee where $q_n \in B(X,\Y)$ and $\alpha_n : X\to \R$ is a function such that $\Vert \alpha_n \Vert_\infty < 1$, called a \emph{scale function}.

If in addition we require that the metric $d_\Y$ is \emph{homogeneous}, that is,
\[
d_\Y(\lambda y_1, \lambda y_2) = |\lambda|\, d_\Y(y_1,y_2), \quad \forall \lambda\in\R, \quad \forall y_1, y_2\in \Y,
\]
then $v_n$ given by (\ref{specialv}) satisfies condition (\ref{scon}) provided that the functions $\alpha_n$ are bounded on $X$ with bounds in $[0,1)$.
For then
\begin{eqnarray}
d_\Y (q_n (x) + \alpha_n (x) \,y_1,q_n (x) + \alpha_n (x) \,y_2) &= & d_\Y(\alpha_n (x) \,y_1,\alpha_n (x) \,y_2)\nonumber \\ & =&  |\alpha_n(x)| d_\Y (y_1, y_2)\nonumber\\
& \leq & \Lambda \,d_\Y (y_1, y_2).\nonumber
\end{eqnarray}
Here, we set $\alpha(x):=(\alpha_n(x))_{n=1}^N$ and $\Lambda := \sup\{\|\alpha_n\|_{\infty}: x\in X;\,n\in \N_N\}$. Note that $\Lambda < 1$.

The associated RB operator (\ref{RB}) is now affine and has the form
\[
T g = \sum_{n=1}^N q_n\circ L_n^{-1} \,\chi_{L_n(X)} + \sum_{n=1}^N (\alpha_n\circ L_n^{-1})\cdot (g\circ L_n^{-1})\,\chi_{L_n(X)},
\]
or, equivalently,
\[
T g = q_n\circ L_n^{-1} + (\alpha_n\circ L_n^{-1}) \cdot (g\circ L_n^{-1}), \quad \textrm{on} \quad L_n(X), n\in \N_N.
\]

The next result is a special case of Theorem 1 in \cite{16} and its validity follows directly from the above considerations.

\begin{theorem}\label{exfix}
Let $\Y$ be an $F$-space with homogeneous metric $d_\Y$. Let $X$ be a nonempty set. Suppose that $\{L_n : X\to X\}_{n \in \N_N}$ is a family of
bijections satisfying property (\ref{p}).

Define a mapping $T:  B(X,\Y)\to B(X,\Y)$ by \be\label{eq3.4} T g = \sum_{n=1}^N (q_n\circ L_n^{-1}) \,\chi_{L_n(X)} + \sum_{n=1}^N (\alpha_n\circ
L_n^{-1})\cdot (g\circ L_n^{-1})\,\chi_{L_n(X)} \ee If $\Lambda < 1$ then the operator $T$ is contractive on the complete metric space
$B(X, \Y)$ and its unique fixed point $\wt{f}$ satisfies the self-referential equation \be\label{3.4} \wt{f} = (q_n\circ L_n^{-1}) + (\alpha_n\circ
L_n^{-1}) \cdot (\wt{f}\circ L_n^{-1}), \quad \textrm{on} \quad L_n(X), n\in \N_N. \ee
\end{theorem}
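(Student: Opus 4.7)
The plan is to deduce Theorem \ref{exfix} as a direct specialization of the Banach Fixed Point Theorem, leveraging the computations already carried out in the paragraphs leading up to the statement. In essence, almost all the work has been done: one only needs to verify that the specific $v_n$ in (\ref{specialv}) puts us into the general framework where $T$ was shown to be a contraction, and then extract the self-referential equation from the fixed point relation.

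First, I would check that $T$ maps $B(X,\Y)$ into itself. Each $q_n \in B(X,\Y)$ and each $\alpha_n$ is bounded by $\Lambda < 1$, so using translation invariance and homogeneity of $d_\Y$, a routine triangle-inequality estimate shows that $\sup_{x\in X} d_\Y(T g (x), T g (x_0))$ is finite for a fixed reference point, so $T g \in B(X,\Y)$. Next I would invoke the uniform contractivity inequality already displayed before the theorem: choosing $v_n(x,y) = q_n(x) + \alpha_n(x) y$, homogeneity of $d_\Y$ gives $d_\Y(v_n(x,y_1), v_n(x,y_2)) = |\alpha_n(x)| d_\Y(y_1, y_2) \leq \Lambda\, d_\Y(y_1, y_2)$, so condition (\ref{scon}) holds with $\ell = \Lambda$.

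With this in hand, I would reproduce (or cite) the estimate (\ref{estim}) applied to the partition: for any $x \in X$ there is a unique $n \in \N_N$ with $x \in L_n(X)$, so the characteristic functions collapse the sum at $x$ to a single term, and taking suprema over $X$ yields $d(T g, T h) \leq \Lambda\, d(g,h)$. Since $\Lambda < 1$ and $(B(X,\Y), d)$ is complete, the Banach Fixed Point Theorem produces a unique $\wt{f} \in B(X,\Y)$ with $T\wt{f} = \wt{f}$.

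Finally, to obtain the self-referential equation (\ref{3.4}), I would restrict the identity $\wt{f}(x) = T\wt{f}(x)$ to $x \in L_n(X)$ for each $n$. By the disjointness clause of the partition property (\ref{p}), $\chi_{L_m(X)}(x) = \delta_{mn}$ on $L_n(X)$, collapsing the defining formula of $T$ in (\ref{eq3.4}) to the single term $(q_n \circ L_n^{-1})(x) + (\alpha_n \circ L_n^{-1})(x) \cdot (\wt{f} \circ L_n^{-1})(x)$, which is exactly (\ref{3.4}). The only real subtlety — and the closest the argument has to an obstacle — is the well-definedness step: one must use translation invariance of $d_\Y$ (to handle the additive term $q_n$) together with homogeneity (to handle the scaling by $\alpha_n$) in order to conclude that boundedness is preserved under $T$; everything else is the routine content of the Banach Fixed Point Theorem.
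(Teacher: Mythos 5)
Your proposal is correct and follows essentially the same route as the paper, which simply notes that the theorem is a special case of Theorem 1 in \cite{16} and ``follows directly from the above considerations'' --- namely, the verification that $v_n(x,y)=q_n(x)+\alpha_n(x)\,y$ satisfies the uniform contraction condition (\ref{scon}) with $\ell=\Lambda$ via translation invariance and homogeneity of $d_\Y$, the contraction estimate for $T$ on $(B(X,\Y),d)$, and the Banach Fixed Point Theorem. Your explicit check that $Tg\in B(X,\Y)$ and your derivation of the self-referential equation by restricting $\wt{f}=T\wt{f}$ to each $L_n(X)$ are exactly the steps the paper leaves implicit.
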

Choose arbitrary $f, b\in B(X,\Y)$ and set
\[
q_n := f\circ L_n - \alpha_n\cdot b.
\]
Then the RB operator $T$ becomes \be Tg = f + (\alpha_n\circ L_n^{-1})\cdot (g-b)\circ L_n^{-1}, \quad \textrm{on} \quad L_n(X), n\in \N_N. \ee and, under the
assumption that $\Lambda < 1$ its unique fixed point $\wt{f}$ satisfies the self-referential equation
\[
\wt{f} = f + (\alpha_n \circ L_n^{-1})\cdot (\wt{f}-b)\circ L_n^{-1}, \quad \textrm{on} \quad L_n(X), n\in \N_N.
\]

On the set $B(X,\Y)\times B(X,\Y)$, we define a binary operation $\cP := \cP_{\alpha, \Delta}$ with respect to a given partition $\Delta$ and a given scale vector  $\alpha $ of contractive maps by (cf. \cite{12})

\be \cP(f,b) := \wt{f}. \ee Instead of $\cP (f,b)$, we write for short $f \fc b$. The binary operation
\[
\fc: B(X,\Y)\times B(X,\Y)\to B(X,\Y)
\]
will be called the \emph{fractal convolution} of the \emph{seed function} $f$ with the \emph{base function} $b$ (for the RB operator $T$).

The fractal convolution of $f$ with $b$ satisfies the fixed point equation
\be\label{fixp} (f\fc b) = f + (\alpha_n \circ L_n^{-1}) ((f\fc b) - b)\circ L_n^{-1}\quad \textrm{on} \quad
L_n(X), n\in \N_N. \ee
This is a self-referential equation endowing the graph of $f\fc b$ with a fractal structure.

\begin{theorem}
The mapping $B(X,\Y)\times B(X,\Y)\ni (f,b) \mapsto f\fc b \in B(X,\Y)$ is linear and bounded.
\end{theorem}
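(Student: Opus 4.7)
The plan is to derive both properties from the self-referential equation (\ref{fixp}) together with the uniqueness part of Theorem \ref{exfix}.

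For linearity, the key observation is that if $g_i := f_i \fc b_i$ for $i=1,2$, then on each piece $L_n(X)$ of the partition the identity
\[
g_i = f_i + (\alpha_n \circ L_n^{-1}) \cdot (g_i - b_i) \circ L_n^{-1}
\]
holds. Taking the linear combination $\lambda_1 g_1 + \lambda_2 g_2$ and distributing the scalars through the composition and multiplication shows that this combination satisfies the self-referential equation (\ref{fixp}) associated with the pair $(\lambda_1 f_1 + \lambda_2 f_2,\,\lambda_1 b_1 + \lambda_2 b_2)$. Because Theorem \ref{exfix} asserts \emph{unique} solvability of that equation in $B(X,\Y)$, this forces
\[
\lambda_1 (f_1 \fc b_1) + \lambda_2 (f_2 \fc b_2) = (\lambda_1 f_1 + \lambda_2 f_2) \fc (\lambda_1 b_1 + \lambda_2 b_2),
\]
which is the desired linearity as a map from the product vector space $B(X,\Y)\times B(X,\Y)$ to $B(X,\Y)$. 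Note that linearity is only asserted jointly on the product; the map is \emph{not} linear in $f$ alone for fixed $b$, since additivity in $f$ would duplicate the base term $b$ on the right-hand side.

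For boundedness, set $\|h\| := d(h,0)$. Starting from (\ref{fixp}) and applying in turn the triangle inequality, translation-invariance of $d_\Y$ (which gives the subadditivity $d_\Y(y_1+y_2,0) \leq d_\Y(y_1,0) + d_\Y(y_2,0)$), and the homogeneity hypothesis, we obtain pointwise on $L_n(X)$
\[
d_\Y(\wt{f}(x),0) \leq d_\Y(f(x),0) + |\alpha_n(L_n^{-1}(x))|\bigl(d_\Y(\wt{f}(L_n^{-1}(x)),0) + d_\Y(b(L_n^{-1}(x)),0)\bigr).
\]
Taking the supremum over $x \in X$ and using $|\alpha_n(\cdot)| \leq \Lambda < 1$ yields $\|\wt{f}\| \leq \|f\| + \Lambda(\|\wt{f}\| + \|b\|)$, which rearranges into the explicit a priori estimate
\[
\|f \fc b\| \leq \frac{1}{1-\Lambda}\|f\| + \frac{\Lambda}{1-\Lambda}\|b\|.
\]
Equipping $B(X,\Y)\times B(X,\Y)$ with the natural product metric (for instance $\|f\|+\|b\|$), this is precisely the sought boundedness, and together with the linearity just established it delivers continuity of $\fc$.

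The only real obstacle is purely bookkeeping: one must verify that the quantity $\|h\| = d(h,0)$ behaves like a seminorm on the metric linear space $B(X,\Y)$, which in turn rests on translation-invariance combined with the triangle inequality for $d_\Y$, together with the homogeneity hypothesis assumed on $\Y$. Once these properties are secured, both the linearity (via uniqueness of the RB fixed point) and the quantitative estimate (via solving a scalar inequality using $\Lambda<1$) follow without further subtlety.
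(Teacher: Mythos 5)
Your proof is correct, and it is genuinely more informative than the paper's, which for this theorem consists solely of a citation to Proposition 2.3 of reference \cite{12} ``applied to the current slightly more general setting.'' Your linearity argument --- verifying that $\lambda_1(f_1\fc b_1)+\lambda_2(f_2\fc b_2)$ satisfies the self-referential equation (\ref{fixp}) for the data $(\lambda_1 f_1+\lambda_2 f_2,\,\lambda_1 b_1+\lambda_2 b_2)$ and invoking uniqueness of the fixed point --- is exactly the mechanism the authors themselves use later, in Section 3, when they reprove linearity of $\cP$ on $\mathcal{L}^p(I)$; you add the worthwhile observation that linearity holds only jointly on the product, not separately in each argument. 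For boundedness the paper's Section 3 route goes through the intermediate estimate $\norm{f\fc b - f}_p \leq \frac{\Lambda}{1-\Lambda}\norm{f-b}_p$ and then the triangle inequality, whereas you extract the a priori bound $\norm{f\fc b}\leq \frac{1}{1-\Lambda}\norm{f}+\frac{\Lambda}{1-\Lambda}\norm{b}$ directly from (\ref{fixp}) by a fixed-point bootstrap; both yield the same constant, but your version has the advantage of working verbatim in the general $F$-space setting of Section 2, using only translation invariance and homogeneity of $d_\Y$. Two small points you correctly flag and should indeed verify explicitly: that $\norm{h}:=d(h,0)$ is finite on $B(X,\Y)$ (it is, since the paper's notion of boundedness controls the diameter of the range and one translates by $d_\Y(h(x_0),0)$), and that when taking suprema the bijectivity of the $L_n$ lets $L_n^{-1}(x)$ range over all of $X$, so the pointwise inequality really does pass to $\norm{\wt{f}}\leq\norm{f}+\Lambda(\norm{\wt{f}}+\norm{b})$. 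With those details in place the argument is complete.
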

\begin{proof}
The result is an immediate consequence of Proposition 2.3. in \cite{12} applied to the current slightly more general setting.
\end{proof}

\begin{remark}\label{rem:110518}
The binary operation $\fc$ is in general not commutative. It is, however, idempotent since $(f\fc f)=f$ for any $f\in B(X,\Y).$
\end{remark}

\section{Fractal Convolution on the Lebesgue Spaces $\mathcal{L}^p$}

In this section, we consider the fractal convolution associated with an RB operator $T$ acting on the Lebesgue spaces $\mathcal{L}^p$ with $0 < p \leq \infty$.
For this purpose, we take the set-up in the previous section and choose for $X$ a nonempty interval $I :=[x_0,x_N]$ on $\R$. Further, we set $(\Y,d_\Y):=(\R,
|\cdot|)$ with the usual norm on $\R$.

Recall that the Lebesgue spaces $\mathcal{L}^p (I)$ are Banach spaces with norm
\[
\norm{g}_p := \left(\int_I |g(x)|^p dx\right)^{1/p}
\]
for $1\leq p \leq \infty$, and $F$-spaces whose topology is induced by the complete translation invariant metric \be d_p (g,h) := \norm{g-h}_{p}^p =
\int_I |g(x) - h(x)|^p dx, \ee for $0 < p < 1$.  To facilitate notation, we will use the norm symbol $\Vert \cdot \Vert_p$
also in the case $0<p<1$  with the obvious interpretation when no confusion can arise.

Let $\Delta: x_0<x_1<...<x_N$ be any partition of the interval $I$ and, for all $n\in \N_N$, let $L_n(x)=a_n x+b_n$ be fixed contractive affinities such that
$L_n(x_0)=x_{n-1}$ and $L_n(x_N)=x_n$.

Fix a partition $\Delta$ and functions $f, b\in \mathcal{L}^p (I)$, and a scale vector function $\alpha\in (\cL^\infty (I))^N$ satisfying $\Lambda < 1$ where
\[
\Lambda := esssup\{|\alpha_n(x)| : x\in I, n\in\N\}.
\]
As above, we associate with $f, b, \alpha$, and $\Delta$ an operator $T := T_{f,b,\alpha, \Delta}:\mathcal{L}^p(I)\to \mathcal{L}^p(I)$ as follows: \be\label{eq1}
Tg(x):=f(x) + (\alpha_n\circ L_n^{-1})(x)\cdot (g-b)\circ L_n^{-1}(x), \ee for $x\in I_n$, $n\in \N_N$. The intervals $I_n$ are defined as $I_n
:=(x_{n-1},x_n]$ for $n=2,\ldots,N$, and $I_1=[x_{0},x_1]$.

It follows from Theorem ~\ref{exfix} that the operator $T$ is a contraction on $\mathcal{L}^p(I)$ admitting a unique fixed point $\widetilde{f}\in \mathcal{L}^p(I)$.

As before, we define on the set $\mathcal{L}^p(I)\times \mathcal{L}^p(I)$ a binary operation $\cP = \cP_{\Delta, \alpha}$ with respect to a given fixed partition $\Delta$ and a
given contractive scale vector function $\alpha\in (\cL^\infty(I))^N$ by \be \cP(f,b) := f\fc b := \wt{f}, \ee called as before the fractal convolution of $f\in  \mathcal{L}^p(I)$ with $b\in  \mathcal{L}^p(I)$.

The map $f \fc b$ is in general discontinuous and not interpolatory with respect to $f$ on $\Delta$; see Figure \ref{fig1}.

\begin{figure}[h]
\includegraphics[angle=0, width=1\textwidth]{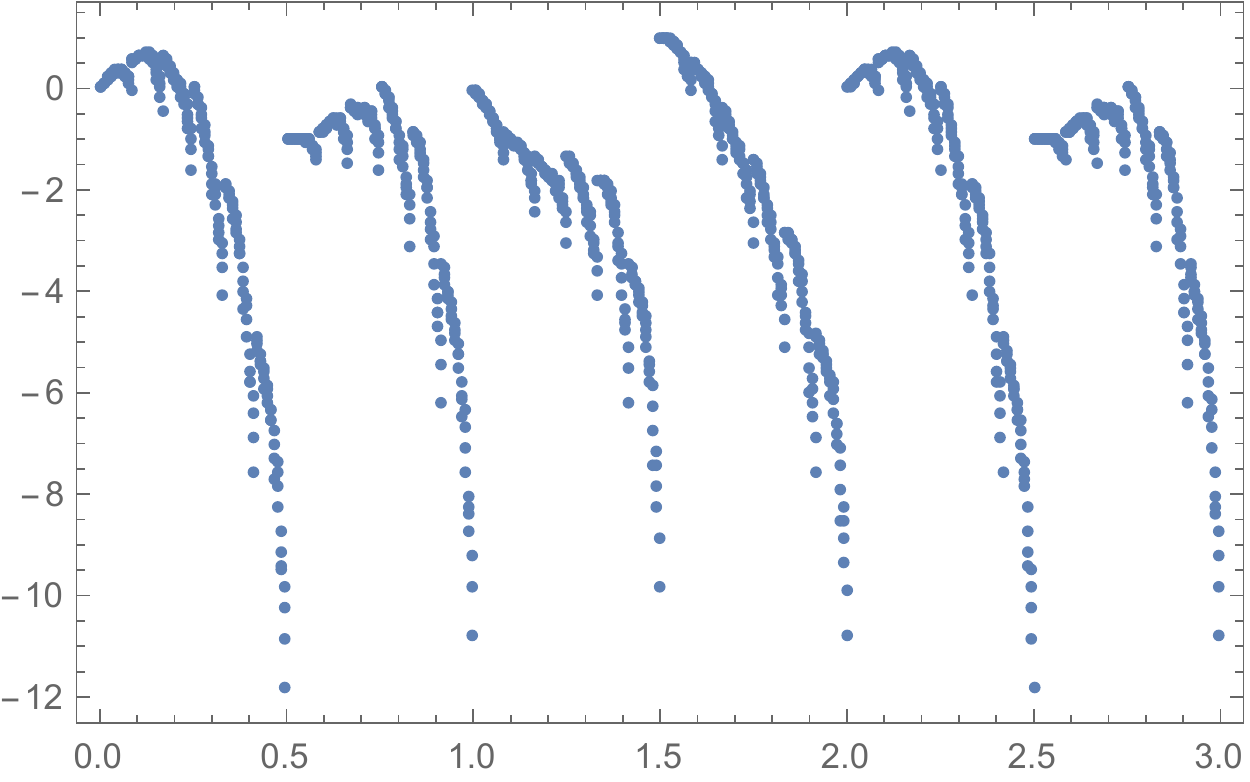} \caption{\small Graph of the fractal convolution $\sin(3 \pi x)*_T \exp(x)$
with respect to a uniform partition of the interval $I=[0,3]$ with $N=6$ and scale functions $\alpha_n(x)=x/8$ for $n=1, 2, \ldots, 6.$}\label{fig1}
\end{figure}

If one wishes to approximate $f\fc b$ to within $\epsilon>0$ of $f$ at the nodes $x_n$, one must choose $b$ close to $f$ at the endpoints of the
interval according to the following result (cf. \cite{12}).

\begin{proposition} The distance between $f$ and its fractal convolution $f\fc b$ does not exceed $\epsilon$ on the nodes of the partition, i.e.,
\[
\sup\{\vert (f \fc b)(x_n) -f(x_n) \vert : n=0, 1, 2,\ldots, N\}  \leq \epsilon,
\]
provided
\[
\max \{\vert f(x_0)-b(x_0)\vert, \vert f(x_N)-b(x_N)\vert\}\le \epsilon(1-\Lambda).
\]
\end{proposition}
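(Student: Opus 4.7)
The plan is to evaluate the fixed point equation (\ref{fixp}) at each node $x_n$ and track the error $e_n := (f\fc b)(x_n) - f(x_n)$. The key observation is that the endpoints $x_0$ and $x_N$ are fixed points of the affine maps $L_1$ and $L_N$ respectively, while any interior node $x_n$ has $L_n^{-1}(x_n) = x_N$, so all node errors ultimately reduce to data at $x_0$ and $x_N$.

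First I would handle the two endpoints. Since $x_0 \in I_1$ and $L_1^{-1}(x_0) = x_0$, evaluating (\ref{fixp}) at $x_0$ gives
\[
(f\fc b)(x_0) = f(x_0) + \alpha_1(x_0)\bigl((f\fc b)(x_0) - b(x_0)\bigr),
\]
which, solved for $e_0$, yields $e_0 = \frac{\alpha_1(x_0)}{1-\alpha_1(x_0)}\bigl(f(x_0) - b(x_0)\bigr)$. Using $|\alpha_1(x_0)|\leq \Lambda$ and the monotonicity of $t\mapsto t/(1-t)$ on $[0,1)$, this gives $|e_0|\leq \frac{\Lambda}{1-\Lambda}|f(x_0)-b(x_0)|$. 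The hypothesis $|f(x_0)-b(x_0)|\leq \epsilon(1-\Lambda)$ then produces $|e_0|\leq \Lambda\epsilon\leq \epsilon$. An entirely parallel computation at $x_N$, using $L_N^{-1}(x_N)=x_N$, yields the matching bound $|e_N|\leq \Lambda\epsilon\leq\epsilon$.

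Next I would handle the interior nodes $x_n$ for $1\leq n\leq N-1$. Since $x_n\in I_n$ and $L_n^{-1}(x_n)=x_N$, the fixed point equation gives
\[
e_n = \alpha_n(x_N)\bigl(e_N + f(x_N) - b(x_N)\bigr),
\]
so that
\[
|e_n|\leq \Lambda\bigl(|e_N|+|f(x_N)-b(x_N)|\bigr)\leq \Lambda\bigl(\Lambda\epsilon + \epsilon(1-\Lambda)\bigr)=\Lambda\epsilon\leq \epsilon.
\]
Combining all three cases completes the proof.

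There is no real obstacle here; the argument is a short direct computation. The only point that requires a little care is the bookkeeping of which $L_n$ sends which endpoint of $I$ to which node, and the fact that the partition convention ($I_n$ half-open except for $I_1$) makes $x_0$ land in the first subinterval and every $x_n$ with $n\geq 1$ land in $I_n$, so that each node has a unique representation in the self-referential equation.
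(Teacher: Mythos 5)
Your proof is correct: the paper itself gives no argument for this proposition (it simply cites reference [12]), and your direct evaluation of the fixed point equation (\ref{fixp}) at the nodes --- using $L_1^{-1}(x_0)=x_0$, $L_N^{-1}(x_N)=x_N$, and $L_n^{-1}(x_n)=x_N$ for the interior nodes --- is exactly the intended computation, with the bounds $|e_0|,|e_N|\le\Lambda\epsilon$ and $|e_n|\le\Lambda(\Lambda\epsilon+\epsilon(1-\Lambda))=\Lambda\epsilon$ all checking out. The only (harmless) implicit assumption is that $|\alpha_n|\le\Lambda$ holds pointwise at the nodes, which is needed anyway for the statement to make sense in the $\mathcal{L}^p$ setting.
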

The next result can be found in, for instance, \cite[Proposition 3]{11} for $1\leq p \leq \infty$, and \cite[Note, p. 253]{11} for $0 < p <1$ .
\begin{theorem}\label{prop3.2}
The $\mathcal{L}^p$-distance between $f\fc b$ and $f$ obeys the following estimates:
\begin{equation}\label{eq5}
\Vert (f\fc b) -f \Vert_p  \leq \frac{\Lambda}{1-\Lambda} \Vert f-b\Vert_p, \quad 1\leq p \leq \infty,
\end{equation}
and

\be\label{eq5a}
d_p (f\fc b), f) \leq \frac{\Lambda^p}{1-\Lambda^p} d_p(f, b), \quad 0 < p < 1.
\ee

\end{theorem}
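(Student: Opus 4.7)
The plan is to exploit the self-referential equation (\ref{fixp}) directly. Subtracting $f$ from both sides of the fixed point equation yields, on each $I_n = L_n(I)$,
\[
(f\fc b)(x) - f(x) = (\alpha_n \circ L_n^{-1})(x)\,\bigl((f\fc b) - b\bigr)\circ L_n^{-1}(x).
\]
Taking the appropriate ``size'' of both sides, together with the change of variables $x = L_n(t)$ (so $dx = a_n\,dt$), is the main engine of the proof. A useful preliminary observation is that $\sum_{n=1}^N a_n = 1$, since $\{L_n(I)\}$ partitions $I$ and each $L_n$ scales lengths by $a_n$.

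For $1 \leq p < \infty$, I would raise the displayed identity to the $p$-th power, integrate over $I_n$, and sum over $n$. Changing variables in each piece and using $|\alpha_n(t)| \leq \Lambda$ almost everywhere gives
\[
\|(f\fc b) - f\|_p^p \leq \Lambda^p \sum_{n=1}^N a_n \int_I |(f\fc b)(t) - b(t)|^p\,dt = \Lambda^p\,\|(f\fc b) - b\|_p^p,
\]
so that $\|(f\fc b) - f\|_p \leq \Lambda\,\|(f\fc b) - b\|_p$. The case $p = \infty$ follows in the same way, taking the essential supremum instead. Then the ordinary triangle inequality
\[
\|(f\fc b) - b\|_p \leq \|(f\fc b) - f\|_p + \|f - b\|_p
\]
and a rearrangement — which is valid precisely because $\Lambda < 1$ — deliver (\ref{eq5}).

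For $0 < p < 1$, I would work directly with the translation-invariant metric $d_p(g,h) = \int_I |g(x) - h(x)|^p\,dx$. The same self-referential identity, raised to the $p$-th power and integrated, together with the change of variables and the bound $|\alpha_n| \leq \Lambda$, produces
\[
d_p\bigl((f\fc b),\,f\bigr) \leq \Lambda^p\,d_p\bigl((f\fc b),\,b\bigr).
\]
For this range of $p$ the triangle inequality is replaced by the subadditivity $|u+v|^p \leq |u|^p + |v|^p$ applied pointwise, which gives exactly $d_p((f\fc b),b) \leq d_p((f\fc b),f) + d_p(f,b)$. Substituting and solving for $d_p((f\fc b),f)$ yields (\ref{eq5a}).

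The computation is essentially routine; the only subtlety worth attention is making sure the $\sum a_n = 1$ normalization kicks in at the right moment so that one really obtains a factor of $\Lambda^p$ (rather than, say, $\Lambda^p \cdot \max_n a_n^{-?}$) in front of the full integral over $I$. The dichotomy between $p\geq 1$ and $p<1$ is handled uniformly by simply swapping the genuine triangle inequality for the $p$-subadditive one, which is why the two estimates take the same algebraic form with $\Lambda$ replaced by $\Lambda^p$.
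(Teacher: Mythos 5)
Your proof is correct, and it uses exactly the technique the paper relies on elsewhere (e.g.\ in the proof of Theorem~\ref{prop:051018}): the self-referential equation (\ref{fixp}), a change of variables with $\sum_{n=1}^N a_n = 1$, and a rearrangement valid because $\Lambda<1$. The paper itself only cites \cite{11} for this statement rather than proving it, but your argument is the standard one and correctly handles the $p\ge 1$ versus $0<p<1$ dichotomy via the triangle inequality versus $p$-subadditivity.
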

Note that the inequalities (\ref{eq5}) and (\ref{eq5a}) immediately imply that if either $\alpha$ is the null vector function or $f=b$ then $(f\fc b)=f$.

Endow the product space $\mathcal{L}^p(I)\times \mathcal{L}^p(I)$ with the following norm, respectively, metric
\begin{eqnarray}\label{prodnorm}
\norm{(f,b)} &:= & \norm{f}_p + \norm{b}_p, \quad 1 \leq p \leq \infty,\\
d((f,b),(f',b')) &:= & d_p (f,f') + d_p(b,b'), \quad 0 < p < 1.
\end{eqnarray}
With this norm, respectively, metric, $\mathcal{L}^p(I)\times \mathcal{L}^p(I)$ is complete for $p>0$.
\begin{theorem}
The operator $\cP: \mathcal{L}^p(I)\times \mathcal{L}^p(I)\to \mathcal{L}^p(I)$ is linear and bounded.
\end{theorem}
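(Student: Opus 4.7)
The plan is to deduce both properties from the characterization of $\cP(f,b)=f\fc b$ as the unique fixed point of the affine RB operator $T_{f,b,\alpha,\Delta}$ defined in \eqref{eq1}, together with the estimate supplied by Theorem~\ref{prop3.2}.

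For \emph{linearity}, I would fix scalars $\mu_1,\mu_2\in\R$ and pairs $(f_1,b_1),(f_2,b_2)\in\mathcal{L}^p(I)\times\mathcal{L}^p(I)$, set $\wt{f}_i:=f_i\fc b_i$, and verify that the candidate $g:=\mu_1\wt{f}_1+\mu_2\wt{f}_2$ solves the self-referential equation \eqref{fixp} corresponding to the data $(\mu_1 f_1+\mu_2 f_2,\mu_1 b_1+\mu_2 b_2)$. Since every term in \eqref{fixp} depends affine-linearly on the triple $(f,b,f\fc b)$ and the maps $L_n^{-1}$ and multiplication by $\alpha_n\circ L_n^{-1}$ commute with taking linear combinations, this is a direct substitution. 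Uniqueness of the fixed point (Theorem~\ref{exfix}) then forces $g=(\mu_1 f_1+\mu_2 f_2)\fc(\mu_1 b_1+\mu_2 b_2)$, giving linearity of $\cP$.

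For \emph{boundedness} in the case $1\le p\le\infty$, I would combine the triangle inequality with the estimate \eqref{eq5}:
\begin{align*}
\|\cP(f,b)\|_p &\leq \|(f\fc b)-f\|_p + \|f\|_p
\leq \frac{\Lambda}{1-\Lambda}\|f-b\|_p + \|f\|_p\\
&\leq \frac{1}{1-\Lambda}\|f\|_p + \frac{\Lambda}{1-\Lambda}\|b\|_p
\leq \frac{1}{1-\Lambda}\bigl(\|f\|_p+\|b\|_p\bigr),
\end{align*}
so that $\cP$ has operator norm at most $(1-\Lambda)^{-1}$ with respect to the product norm \eqref{prodnorm}. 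This together with linearity yields continuity.

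For the range $0<p<1$, the metric is not homogeneous, so one must argue with $d_p$ instead of a norm. Using \eqref{eq5a} and the triangle inequality for $d_p$ (which is genuine since $d_p$ is translation invariant and $|x+y|^p\le|x|^p+|y|^p$),
\[
d_p(\cP(f,b),0) \leq d_p(f\fc b,f)+d_p(f,0) \leq \tfrac{\Lambda^p}{1-\Lambda^p}d_p(f,b)+d_p(f,0),
\]
and expanding $d_p(f,b)\le d_p(f,0)+d_p(0,b)$ gives $d_p(\cP(f,b),0)\le(1-\Lambda^p)^{-1}\bigl(d_p(f,0)+d_p(b,0)\bigr)$; an analogous estimate at a general pair $(f',b')$ (applied to differences, using the linearity established above) shows that $\cP$ is Lipschitz with respect to the product metric. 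The only mild subtlety here is to interpret ``bounded'' correctly for $p<1$: since $\mathcal{L}^p(I)$ is only an $F$-space, I would explicitly note that boundedness is understood as continuity of the linear map (i.e.\ Lipschitz continuity in $d_p$), which is precisely what the above inequality delivers.
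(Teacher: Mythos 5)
Your proposal is correct and follows essentially the same route as the paper: linearity via uniqueness of the fixed point of the affine RB operator, boundedness for $1\le p\le\infty$ via the triangle inequality combined with the estimate \eqref{eq5}, and the case $0<p<1$ handled through the translation-invariant metric $d_p$ together with \eqref{eq5a} and linearity applied to differences. Your explicit remark that ``bounded'' must be read as Lipschitz continuity in $d_p$ when $0<p<1$ is a welcome clarification that the paper leaves implicit.
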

\begin{proof}

The linearity of $\cP$, namely
\begin{eqnarray}
(\gamma f+f')\fc (\gamma b+b')  &= & (\gamma f)\fc (\gamma b) + f'\fc b' \nonumber \\
& = & \gamma (f\fc b) + f'\fc b', \quad \gamma\in \R, \nonumber
\end{eqnarray}
follows from the linearity of $T$ and the uniqueness of the fixed point $\wt{f} = f\fc b$.

For $1\leq p\leq \infty$, the boundedness is implied by the following
argument:
\begin{eqnarray}
\norm{\cP(f,b)}_p &= &\norm{f\fc b}_p = \norm{f \fc b - f + f}_p \leq \norm{f \fc b -f} + \norm{f}_p \nonumber \\
& \leq & \frac{\Lambda}{1 - \Lambda}\,\norm{f-b}_p + \norm{f}_p\\ & \leq & \frac{\Lambda}{1 - \Lambda}\,(\norm{f}_p + \norm{b}_p) + \norm{f}_p \nonumber\\
& = & \frac{1}{1-\Lambda}\norm{f}_p + \frac{\Lambda}{1-\Lambda}\norm{b}_p \nonumber \\
& \leq &\frac{1}{1-\Lambda}\left(\norm{f}_p + \norm{b}_p\right) = \frac{1}{1-\Lambda} \norm{(f,b)}.\nonumber
\end{eqnarray}
Therefore, the norm of $\cP$ satisfies the inequality (see also \cite{12})
 \be \norm{\cP} \leq \frac{1}{1-\Lambda}, \ee
 where the operator norm of $\cP$ is taken with respect to the product norm (\ref{prodnorm}).

Now suppose $0<p<1$. Using the fact that $\cP$ is linear, we have that
\begin{eqnarray}
d_p (\cP(f,b),\cP(f',b')) &= & \norm{f\fc b - f'\fc b'}_p^p = \norm{(f - f')\fc (b - b')}_p^p \nonumber \\
& = & \norm{f''\fc b''}_p^p \leq \frac{\Lambda^p}{1-\Lambda^p}\norm{f'' - b''}_p^p + \norm{f''}_p^p \nonumber\\
& \leq & \frac{\Lambda^p}{1-\Lambda^p}\norm{f''}_p^p + \frac{\Lambda^p}{1-\Lambda^p}\norm{b''}_p^p + \norm{f''}_p^p \nonumber\\
& \leq & \frac{1}{1-\Lambda^p} \left(\norm{f''}_p^p + \norm{b''}\right)_p^p = \frac{1}{1-\Lambda^p} \left(\norm{f - f'}_p^p + \norm{b - b'}_p^p\right) \nonumber\\
& = & \frac{1}{1-\Lambda^p} d_p ((f,b),(f',b')). \nonumber
\end{eqnarray}

\end{proof}

\begin{theorem}\label{prop:051018}
Let $f, f'\in \mathcal{L}^p(I)$ be given seed functions. Then
\begin{equation} \label{eq:140518}
\norm{(f \fc b) - (f' \fc b)}_p \leq \frac{1}{1 - \Lambda} \norm{f - f'}_p,
\end{equation}
for all base
functions $b\in \mathcal{L}^p(I)$.\\
Similarly, for given base functions $b, b'\in \mathcal{L}^p(I)$,
\begin{equation} \label{eq:140519}
\norm{(f \fc b) - (f \fc b')}_p \leq \frac{\Lambda}{1 - \Lambda} \norm{b - b'}_p,
\end{equation}
for
all seed functions $f\in \mathcal{L}^p(I)$.
\end{theorem}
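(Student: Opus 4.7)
The plan is to leverage two facts already established in the paper: the linearity of the operator $\cP$ (the preceding theorem) and the distance estimate between a fractal convolution and its seed function (Theorem \ref{prop3.2}). By linearity, one can collapse the two-term differences into single fractal convolutions,
\[
(f\fc b) - (f'\fc b) = (f-f')\fc 0, \qquad (f\fc b) - (f\fc b') = 0 \fc (b-b'),
\]
which is the key algebraic observation driving both estimates.

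For the inequality (\ref{eq:140518}), with $1\leq p \leq \infty$, I would apply (\ref{eq5}) to the seed $f-f'$ and the base $0$:
\[
\bigl\lVert (f-f')\fc 0 - (f-f')\bigr\rVert_p \leq \frac{\Lambda}{1-\Lambda}\,\norm{f-f'}_p,
\]
and then conclude by the triangle inequality
\[
\bigl\lVert (f-f')\fc 0\bigr\rVert_p \leq \norm{f-f'}_p + \frac{\Lambda}{1-\Lambda}\,\norm{f-f'}_p = \frac{1}{1-\Lambda}\,\norm{f-f'}_p.
\]
For (\ref{eq:140519}) the argument is even more direct: applying (\ref{eq5}) to the seed $0$ and the base $b-b'$ yields
\[
\bigl\lVert 0\fc (b-b') - 0 \bigr\rVert_p \leq \frac{\Lambda}{1-\Lambda}\,\norm{b-b'}_p,
\]
which is exactly the claim after using the linearity identity above.

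The range $0<p<1$ requires only a cosmetic change: replace (\ref{eq5}) by (\ref{eq5a}), and replace the ordinary triangle inequality in the first part by the subadditivity $\norm{u+v}_p^p \leq \norm{u}_p^p + \norm{v}_p^p$ of the quasi-norm. Carrying this through yields the analogous bounds with $\Lambda^p$ and $1-\Lambda^p$ in place of $\Lambda$ and $1-\Lambda$. I do not anticipate a real obstacle here; both inequalities reduce to two-line computations once the linearity identity is written down, and the only bookkeeping is keeping the constants consistent and handling the $p<1$ case separately.
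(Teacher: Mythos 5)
Your proof is correct, but it follows a genuinely different route from the paper's. You reduce both estimates to two previously established facts: the linearity of $\cP$, which collapses the differences into $(f\fc b)-(f'\fc b)=(f-f')\fc 0$ and $(f\fc b)-(f\fc b')=0\fc(b-b')$, and the seed-distance bound (\ref{eq5}) of Theorem \ref{prop3.2}, applied with base $0$ (plus a triangle inequality) in the first case and with seed $0$ in the second. The paper instead argues directly from the self-referential equation (\ref{fixp}): subtracting the fixed-point equations for the two convolutions, performing the change of variables $x\mapsto L_n(x)$ on each subinterval (using $\sum_n a_n=1$), and applying the reverse triangle inequality to get $\norm{(f\fc b)-(f'\fc b)}_p-\norm{f-f'}_p\leq\Lambda\norm{(f\fc b)-(f'\fc b)}_p$, from which the constant $\tfrac{1}{1-\Lambda}$ falls out. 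Your argument is shorter given what is already on the table and makes the asymmetry of the two constants transparent — they are precisely the norm bounds for the operators $\cP_0^2$ and $\cP_0^1$ studied later in Section 6, and indeed the paper itself re-derives (\ref{eq:150617}) from (\ref{eq5}) with $f=0$ exactly as you do. The paper's direct computation has the advantage of being self-contained (Theorem \ref{prop3.2} is only cited from an external reference) and of exhibiting the intermediate inequality $\norm{((f\fc b)-(f'\fc b))-(f-f')}_p\leq\Lambda\norm{(f\fc b)-(f'\fc b)}_p$, a form reused later in (\ref{eq:140522}). Your handling of $0<p<1$ via the subadditivity of $\norm{\cdot}_p^p$ and the substitution $\Lambda\to\Lambda^p$ is consistent with the paper's conventions, and there is no circularity since both ingredients you invoke precede the theorem.
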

\begin{proof}
Let $f, f'\in \mathcal{L}^p(I)$ be seed functions and $b\in \mathcal{L}^p(I)$ any base function. Then,
\begin{eqnarray}
\norm{(f \fc b) - (f'\fc b)}_p - \norm{f - f'}_p & \leq & \norm{((f \fc b) - (f' \fc b)) - (f - f')}_p \nonumber \\ &  \leq & \Lambda \norm{(f \fc b) - (f' \fc b)}_p.\nonumber
\end{eqnarray}
Above, we used the fixed point equation (\ref{fixp}) for $(f\fc b)$, a change of variable, as well as the fact that $\sum\limits_{n=1}^N a_n = 1$.

The second inequality is proven in a similar way.
\end{proof}

As a corollary to Theorem \ref{prop:051018} one obtains the following result.
\begin{corollary}
Let $f, f', b, b'\in \mathcal{L}^p(I)$. Then \be \norm{(f \fc b) - (f' \fc b')}_p \leq \frac{1}{1 - \Lambda}\left(\norm{f-f'}_p + \Lambda \norm{b-b'}_p\right). \ee
\end{corollary}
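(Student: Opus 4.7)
The plan is a straight triangle-inequality argument, using Theorem~\ref{prop:051018} as the workhorse. The key observation is that the two estimates (\ref{eq:140518}) and (\ref{eq:140519}) decouple the dependence of $f \fc b$ on the seed argument from its dependence on the base argument, so the job reduces to inserting a suitable intermediate term and applying each bound once.

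Concretely, I would write
\[
(f \fc b) - (f' \fc b') = \bigl[(f \fc b) - (f' \fc b)\bigr] + \bigl[(f' \fc b) - (f' \fc b')\bigr],
\]
take $\cL^p$-norms (in the case $1\le p\le \infty$; the analogous splitting using the metric $d_p$ handles $0<p<1$), and apply the triangle inequality. For the first bracket I invoke (\ref{eq:140518}) with the common base $b$ to obtain an upper bound of $\tfrac{1}{1-\Lambda}\norm{f - f'}_p$, and for the second bracket I invoke (\ref{eq:140519}) with the common seed $f'$ to obtain $\tfrac{\Lambda}{1-\Lambda}\norm{b - b'}_p$. Summing and factoring out $\tfrac{1}{1-\Lambda}$ gives the claimed inequality.

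There is no genuine obstacle here; the only small point of care is the choice of intermediate term. One could just as well insert $f \fc b'$ instead of $f' \fc b$, but then (\ref{eq:140519}) applied to $(f \fc b) - (f \fc b')$ yields $\tfrac{\Lambda}{1-\Lambda}\norm{b - b'}_p$ and (\ref{eq:140518}) applied to $(f \fc b') - (f' \fc b')$ yields $\tfrac{1}{1-\Lambda}\norm{f - f'}_p$, giving the same bound. Either route works, and the proof is essentially a one-line computation once the intermediate $f' \fc b$ (or $f \fc b'$) has been inserted.
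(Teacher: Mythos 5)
Your proof is correct and coincides with the paper's own argument: the paper inserts the same intermediate term $f' \fc b$, applies the triangle inequality, and then uses the two estimates of Theorem~\ref{prop:051018} exactly as you describe (with the modification $\Lambda \to \Lambda^p$ for $0<p<1$, which you also note via the metric $d_p$). No differences worth flagging.
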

\begin{proof}
\begin{eqnarray}
\norm{(f \fc b) - (f' \fc b')}_p &= & \norm{(f \fc b) - (f' \fc b) + (f'\fc b) - (f'\fc b')}_p\nonumber \\ &\leq &\norm{(f \fc b) - (f' \fc b)}_p + \norm{(f'\fc b) - (f'\fc b')}_p \nonumber\\
&\leq & \frac{1}{1-\Lambda}\norm{f-f'}_p + \frac{\Lambda}{1-\Lambda}\norm{b-b'}_p \nonumber \\ & = & \frac{1}{1-\Lambda}\left(\norm{f-f'}_p + \Lambda
\norm{b-b'}_p\right),\nonumber
\end{eqnarray}
with the modification $\Lambda \to \Lambda^p$ if $0 < p < 1$ (See Theorem \ref{prop3.2}.)
\end{proof}

We can also define a partial fractal convolutions where one of the inputs of $\cP$ is held fixed. To this end, let
\begin{eqnarray}
\cP_f^1 (b) := \cP(f,b):\mathcal{L}^p(I)\to \mathcal{L}^p(I),\\ b\mapsto f\fc b,
\end{eqnarray}
for a fixed $f\in \mathcal{L}^p(I)$, and
\begin{eqnarray}
\cP_b^2 (f) := \cP(f,b):\mathcal{L}^p(I)\to \mathcal{L}^p(I),\\ f\mapsto f\fc b,
\end{eqnarray}
for a fixed $b\in \mathcal{L}^p(I)$.

By Theorem \ref{prop:051018}, the partial fractal convolution operators are nonlinear and Lipschitz continuous, and they satisfy the following inequalities:
\begin{eqnarray}
\norm{\cP_f^1 (b) - \cP_f^1 (b')}_p &\leq & \frac{\Lambda}{1 - \Lambda} \norm{b - b'}_p,\\ \norm{\cP_b^2 (f) - \cP_b^2 (f')}_p &\leq & \frac{1}{1 - \Lambda}
\norm{f - f'}_p,
\end{eqnarray}
(with suitable modifications for $0<p<1$).

\begin{remark}
Note that $\cP_f^1$ is a contractive operator on $E$ if $\Lambda < \frac{1}{2}$. Its unique fixed point $b\in \mathcal{L}^p (I)$ satisfies $P_f^1(b) = f\fc b = b$,
which by the uniqueness of the fixed point $\wt{f}$ of $T$, implies that $b = f$. (See Remark \ref{rem:110519}.)
\end{remark}

\section{Fractal convolution of continuous functions}


Let us consider a set of data $D=\{(x_n, y_n)\}_{n=0}^N$ and a nonempty interval $I :=[x_0, x_N]$. Denote by $\cC(I)$ the Banach space of continuous functions $I\to \R$ endowed with the uniform (or supremum) norm.

\begin{definition}
Denote by $\mathcal{J}_D$ the set of all continuous interpolants of $D=\{(x_n, y_n)\}_{n=0}^N$, i.e.,
\begin{equation}
\mathcal{J}_D := \{f\in \cC(I): f(x_n)=y_n; \;n=0,1,\ldots, N\}.
\end{equation}
\end{definition}

\begin{proposition}
$\mathcal{J}_D$ is a complete metric subspace (with respect to the metric induced by the supremum norm) and a convex subset of $\cC(I)$.
 \end{proposition}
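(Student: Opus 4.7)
The plan is to establish the two assertions separately, since both are fairly direct consequences of standard facts about uniform convergence and linear combinations of continuous functions.

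For convexity, I would take arbitrary $f,g\in\mathcal{J}_D$ and $\lambda\in[0,1]$, and verify that the convex combination $h:=\lambda f+(1-\lambda)g$ lies in $\mathcal{J}_D$. Continuity of $h$ is immediate since $\mathcal{C}(I)$ is a vector space, and for each interpolation node $x_n$ one computes
\[
h(x_n)=\lambda f(x_n)+(1-\lambda)g(x_n)=\lambda y_n+(1-\lambda)y_n=y_n,
\]
so the interpolation conditions are preserved.

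For completeness, since $\mathcal{C}(I)$ endowed with the supremum norm is a Banach space (hence a complete metric space), it suffices to show that $\mathcal{J}_D$ is closed in $\mathcal{C}(I)$. I would take a Cauchy sequence $(f_k)\subset\mathcal{J}_D$ with respect to the uniform metric; by completeness of $\mathcal{C}(I)$ it converges uniformly to some $f\in\mathcal{C}(I)$. Uniform convergence implies pointwise convergence at every $x_n$, and since $f_k(x_n)=y_n$ for every $k$ and every $n\in\{0,1,\ldots,N\}$, passing to the limit yields $f(x_n)=y_n$. Hence $f\in\mathcal{J}_D$, proving closedness and therefore completeness.

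There is no serious obstacle here: the argument is really just the observation that each point-evaluation functional $\mathrm{ev}_{x_n}:\mathcal{C}(I)\to\mathbb{R}$, $f\mapsto f(x_n)$, is continuous with respect to the uniform norm, so $\mathcal{J}_D=\bigcap_{n=0}^{N}\mathrm{ev}_{x_n}^{-1}(\{y_n\})$ is a finite intersection of closed sets, hence closed; and it is an affine subspace, hence convex. The write-up would make those two observations explicit in the order above.
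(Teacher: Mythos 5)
Your proof is correct and follows essentially the same route as the paper: closedness of $\mathcal{J}_D$ in the Banach space $\mathcal{C}(I)$ via uniform (hence pointwise) convergence at the nodes, plus a direct verification of convexity. The extra remark about point-evaluation functionals is a pleasant reformulation but not a different argument.
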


 \begin{proof}
Let us consider  a convergent sequence of elements  $\{f_m\}\subset\mathcal{J}_D\subset \cC(I)$ such that $\lim\limits_{m\to\infty} f_m=f $ where the limit is taken with respect to the metric induced by the supremum norm. By uniform convergence, $f\in \cC(I)$ and $f(x_n) = \lim\limits_{m\to\infty} f_m(x_n) =
\lim\limits_{m\to\infty} y_n=y_n$, for all $n = 0,1,\ldots, N$. Thus, $f\in \mathcal{J}_D$. Consequently, $\mathcal{J}_D$ is a closed subset of the
complete metric space $\cC(I)$ and therefore complete. The convexity is a straightforward consequence of the definition of $\mathcal{J}_D$.
 \end{proof}

For $n\in \N_N$, let $\alpha_n, q_n\in \cC(I)$. The operator $\widehat{T}:\mathcal{J}_D \to \mathcal{J}_D$ defined by 
\begin{equation}\label{eq:11111}
\widehat{T}f(x) := (\alpha_n \circ L_n^{-1})(x) \cdot (f \circ L_n^{-1})(x)+q_n\circ L_n^{-1}(x),\quad\forall \ x \in [x_{n-1},x_n],\ \ n\in \N_N,
\end{equation}
satisfying the join-up conditions
$$(\wh{T} f)(x_0) = y_0,  \quad  (\wh{T} f)(x_N) = y_N $$
 $$ (\wh{T} f)(x_n-) = y_n = (\wh{T} f)(x_n+), \quad n = 1, \ldots, N-1,$$
maps an interpolant of $D$ into another interpolant of $D$ and it can be considered as a discrete dynamical system on $\mathcal{J}_D$. If $\Lambda <1$ then  $\widehat{T}$ is contractive and possesses a unique fixed point $\widehat{f}$.


The function attractor $\widehat{f}$ depends continuously on the scale vector of functions $\alpha := (\alpha_1, \ldots, \alpha_n)$. This is a consequence of the continuous dependence of fixed points on parameters
for contractive mappings. (See, for instance, \cite{17}).

The next result shows that the operator $\widehat{T}$ can be extended to the complete metric space of continuous functions $\cC(I)$.

\begin{theorem}[\cite{18}]\label{th2}
Let $M$ be a nonempty closed subset of a metric space $X$. Let $T: M\subset X\rightarrow Y$ be a continuous operator to a normed space $Y$. Then $T$
has a continuous extension $\overline{T}: X\rightarrow conv T(M)$, where $conv T(M)$ denotes the convex hull of $T(M)$.
\end{theorem}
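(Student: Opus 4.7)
The plan is to give the classical Dugundji extension argument, which essentially reduces the problem to constructing a good partition of unity on the open set $X\setminus M$ and convex-combining values of $T$ sampled on $M$.

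First I would exploit the fact that every metric space is paracompact. Since $M$ is closed, the complement $U:=X\setminus M$ is open, and for every $x\in U$ the distance $r_x:=d(x,M)$ is strictly positive. Consider the open cover of $U$ given by the balls $\{B(x,r_x/2):x\in U\}$, and pass to a locally finite open refinement $\{U_\lambda\}_{\lambda\in\Lambda}$ subordinate to it (possible by paracompactness of the metric space $U$). Subordinate to this refinement, choose a continuous partition of unity $\{\phi_\lambda\}_{\lambda\in\Lambda}$ on $U$, so that $\mathrm{supp}\,\phi_\lambda\subset U_\lambda$ and $\sum_\lambda \phi_\lambda(x)=1$ for every $x\in U$. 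For each $\lambda$, fix a point $p_\lambda\in M$ with $d(U_\lambda,p_\lambda)\le 2\,d(U_\lambda,M)$ (such a point exists by definition of infimum).

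Next I would define
\[
\overline{T}(x):=
\begin{cases}
T(x), & x\in M,\\[2pt]
\displaystyle\sum_{\lambda\in\Lambda}\phi_\lambda(x)\,T(p_\lambda), & x\in X\setminus M.
\end{cases}
\]
Because each convex combination of elements of $T(M)$ belongs to $\mathrm{conv}\,T(M)$, and convex combinations of such combinations stay in $\mathrm{conv}\,T(M)$, the range of $\overline{T}$ is contained in $\mathrm{conv}\,T(M)$. Continuity on the open set $X\setminus M$ is immediate: the sum is locally finite, so on a neighbourhood of any $x\in X\setminus M$ only finitely many $\phi_\lambda$ are nonzero, and each $\phi_\lambda$ is continuous.

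The main obstacle, and the step that requires careful distance bookkeeping, is continuity at a boundary point $x_0\in M$. Given $\varepsilon>0$, use continuity of $T$ on $M$ to produce $\delta>0$ such that $\|T(p)-T(x_0)\|<\varepsilon$ whenever $p\in M$ and $d(p,x_0)<\delta$. I would then show that for $x\in U$ sufficiently close to $x_0$, every index $\lambda$ with $\phi_\lambda(x)\neq 0$ satisfies $d(p_\lambda,x_0)<\delta$. Indeed, if $\phi_\lambda(x)\neq 0$ then $x\in U_\lambda\subset B(y,r_y/2)$ for some $y\in U$, which forces $d(x,y)<r_y/2\le d(y,x_0)/2$ and hence $d(y,x_0)\le 2\,d(x,x_0)$; the choice of $p_\lambda$ then gives $d(p_\lambda,x_0)\le d(p_\lambda,U_\lambda)+\mathrm{diam}(U_\lambda)+d(U_\lambda,x_0)$, which can be bounded by a fixed multiple of $d(x,x_0)$. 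Thus for $x$ sufficiently close to $x_0$ all the relevant $p_\lambda$ lie in the $\delta$-ball around $x_0$, and using $\sum_\lambda \phi_\lambda(x)=1$ together with the triangle inequality one obtains
\[
\|\overline{T}(x)-T(x_0)\|\le \sum_\lambda \phi_\lambda(x)\,\|T(p_\lambda)-T(x_0)\|<\varepsilon,
\]
establishing continuity at $x_0$ and completing the proof.
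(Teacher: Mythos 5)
Your argument is correct: it is the classical Dugundji extension construction (partition of unity on $X\setminus M$ subordinate to the cover by balls $B(x,d(x,M)/2)$, sample points $p_\lambda\in M$ nearly realizing $d(U_\lambda,M)$, and the distance bookkeeping $d(p_\lambda,x_0)\le C\,d(x,x_0)$ at boundary points), and the estimates you sketch all close up. The paper itself gives no proof at all --- it states the result with only a citation to Dugundji's \emph{Topology} --- so your write-up simply reproduces the standard proof of the cited theorem rather than diverging from anything in the paper.
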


\begin{theorem} \label{th3}
The operator $\widehat{T}:\mathcal{J}_D \subset \cC(I)\rightarrow \mathcal{J}_D$ can be extended to a continuous operator $\overline{T}:\cC(I) \rightarrow\mathcal{J}_D$.
\end{theorem}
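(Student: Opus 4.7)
The plan is to appeal directly to Theorem \ref{th2} (the Dugundji-type extension result cited from \cite{18}), so the work reduces to checking its hypotheses and then identifying the codomain that the general extension produces.

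First I would recall from the preceding proposition that $\mathcal{J}_D$ is a nonempty closed convex subset of the metric space $\cC(I)$, which verifies the "closed subset" hypothesis. Next I would check that $\widehat{T}:\mathcal{J}_D\to\mathcal{J}_D$ is continuous (with $\cC(I)$ serving as the target normed space $Y$ in Theorem \ref{th2}). This is essentially immediate from the contractivity estimate used earlier in the paper: for $f,g\in\mathcal{J}_D$ and any $n\in\N_N$ and $x\in[x_{n-1},x_n]$, formula (\ref{eq:11111}) gives
\[
|\widehat{T}f(x)-\widehat{T}g(x)|=|(\alpha_n\circ L_n^{-1})(x)|\,|(f-g)\circ L_n^{-1}(x)|\le \Lambda\,\|f-g\|_\infty,
\]
so that $\|\widehat{T}f-\widehat{T}g\|_\infty\le\Lambda\|f-g\|_\infty$. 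Hence $\widehat{T}$ is Lipschitz, in particular continuous.

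With these hypotheses in place, Theorem \ref{th2} applied to $M:=\mathcal{J}_D$, $X:=\cC(I)$, $Y:=\cC(I)$ yields a continuous extension $\overline{T}:\cC(I)\to\mathrm{conv}\,\widehat{T}(\mathcal{J}_D)$. It remains to argue that this codomain sits inside $\mathcal{J}_D$. Since $\widehat{T}$ maps $\mathcal{J}_D$ into itself, we have $\widehat{T}(\mathcal{J}_D)\subseteq\mathcal{J}_D$; and since $\mathcal{J}_D$ is convex, the convex hull of any of its subsets is again contained in $\mathcal{J}_D$. Therefore
\[
\mathrm{conv}\,\widehat{T}(\mathcal{J}_D)\subseteq\mathcal{J}_D,
\]
and we may regard $\overline{T}$ as a continuous map $\cC(I)\to\mathcal{J}_D$ extending $\widehat{T}$, as claimed.

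The only non-routine point is recognizing that the target of Theorem \ref{th2} is $\mathrm{conv}\,\widehat{T}(M)$ rather than $\mathcal{J}_D$ outright; this is handled cleanly by the convexity of $\mathcal{J}_D$ established in the preceding proposition, which is precisely why that property was recorded. Continuity of $\widehat{T}$ and closedness of $\mathcal{J}_D$ are the other two inputs, and both have already been verified in the excerpt.
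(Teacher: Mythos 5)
Your proposal is correct and follows essentially the same route as the paper: verify continuity of $\widehat{T}$ (it is a contraction when $\Lambda<1$), apply Theorem \ref{th2} with $M=\mathcal{J}_D$ and $X=Y=\cC(I)$, and use the convexity of $\mathcal{J}_D$ to conclude that $\mathrm{conv}\,\widehat{T}(\mathcal{J}_D)\subseteq\mathcal{J}_D$. The only cosmetic difference is that the paper asserts the equality $\mathrm{conv}\,\widehat{T}(\mathcal{J}_D)=\mathcal{J}_D$ while you settle for the inclusion, which is all that is needed.
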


\begin{proof}
$\widehat{T}$ is continuous because it is a contraction. In Theorem \ref{th2}, set $M:=\mathcal{J}_D$ and $X := Y := \cC(I)$. Then $T=\widehat{T}$ admits
a continuous extension
\[
\overline{T}:\cC(I) \rightarrow conv \widehat{T}(\mathcal{J}_D).
\]
As $\widehat{T}(\mathcal{J}_D) \subset\mathcal{J}_D$ and $\mathcal{J}_D$ is convex, $conv \widehat{T}(\mathcal{J}_D) = \mathcal{J}_D$.
\end{proof}

Let us now consider the RB operator (\ref{eq1}), i.e., we choose
\[
q_n(x) := f\circ L_n(x) - \alpha_n(x) b(x),
\]
where $f, b \in \mathcal{J}_D$ are two fixed continuous functions interpolating the set of data $D$.

For this choice of functions, the operator $\wh{T}$ satisfies the conditions needed to obtain a continuous fractal function. If the scale vector function $\alpha$ satisfies
$\Lambda < 1$, then the fixed point equation for $\wh{T}$ reads \be \wt{f}(x) = f(x)+ (\alpha_n \circ L_n^{-1})(x) \cdot (\wt{f}-b) \circ
L_n^{-1} (x),\quad x \in I_n, \;\; n\in \N_N, \ee
where $I_n=[x_{n-1}, x_n]$. The fixed point $\wt{f}$ is the fractal convolution of $f$ with $b$. \\

But we wish now to extend the convolution to any pair of continuous functions $f', b'.$  To this end, consider the operator $\cP: \cJ_D\times\cJ_D\to \cJ_D$ with $\cP (f,b) := f\fc b$. By Theorem \ref{th3}, this operator can be continuously extended to an operator \be \oP:
\cC(I)\times \cC(I)\to \cJ_D. \ee


Thus, the fractal convolution of two arbitrary continuous functions $f', b'$, not necessarily matching any join-up conditions at the nodes of $I$, can be defined as:
\[
f' \fc b' := \overline{\mathcal{P}}(f',b').
\]
The result of this binary operation is a continuous interpolant of the set of data $D.$

\section{Sets Associated with Fractal Convolution}


Given subsets $F, B\subset \mathcal{L}^p(I)$, let us define the following convolution sets:
\[
F\fc b :=\{ f\fc b: f\in F \},
\]
\[
f\fc B :=\{f\fc b:  b\in B \},
\]
and
\[
F\fc B :=\{f\fc b: f\in F \wedge b\in B \}.
\]
From the definitions we deduce the following properties of the convolution sets:
\begin{enumerate}
\item[(1)] If $b\in F$ then $b\in F*_T b,$ since $b=b\fc b$ (Remark \ref{rem:110518}).
\item[(2)] If $f\in B$ then $f\in f\fc B.$ 
\end{enumerate}
Consequently,
\begin{enumerate}
\item[(3)] $(F\cap B) \subseteq (F\fc B)\bigcap (B\fc F)$ 
    for any subsets $F\subset \mathcal{L}^p(I)$ and $B \subset \mathcal{L}^p(I).$
\end{enumerate}
Denote the distance between subsets of a normed space as
\[
\delta(A, C) =\inf \{\Vert g - g'\Vert: g \in A, g'\in C\}.
\]
\begin{remark}  $\delta$ does not satisfy the properties of a metric.
\end{remark}
As consequences of Proposition \ref{prop:051018}, we obtain the following estimates:
\[
\delta(F*_Tb, F*_Tb')\leq \frac{\Lambda}{ 1 -  \Lambda} \Vert b - b'\Vert_p,
\]
\[
\delta(f*_TB, f'*_TB)\leq \frac{1}{ 1 - \Lambda} \Vert f - f'\Vert_p,
\]
with the modification $\Lambda \to \Lambda^p$ for $0 < p <1$. (See Theorem \ref{prop3.2}.)
\begin{proposition}
If $F$ is bounded with bound  $ M_F$ and $B$ is bounded with bound $M_B,$ then for any $f, b\in E$:
\[
\delta(F, F*_Tb) \leq  \frac{\Lambda}{ 1 -  \Lambda} (M_F+\Vert b \Vert_p),
\]
\[
\delta(B, f*_TB) \leq  \frac{1}{ 1 -  \Lambda} (M_B+\Vert f \Vert_p),
\]
with the modification $\Lambda \to \Lambda^p$ for $0 < p <1$.
\end{proposition}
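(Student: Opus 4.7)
The plan is to bound the infimum defining $\delta$ by choosing a convenient element in each set and then invoking the estimates of Theorem \ref{prop3.2}.

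For the first inequality, I would pick any $f \in F$. Because $b$ is fixed, the corresponding convolved element $f \fc b$ lies in $F \fc b$, so by definition of the infimum
\[
\delta(F, F \fc b) \leq \norm{f - (f \fc b)}_p.
\]
Applying Theorem \ref{prop3.2} gives $\norm{f - (f\fc b)}_p \leq \frac{\Lambda}{1-\Lambda}\norm{f-b}_p$, and then the triangle inequality together with the hypothesis $\norm{f}_p \leq M_F$ yields $\norm{f-b}_p \leq M_F + \norm{b}_p$. Concatenating these estimates produces the claimed bound in the case $1\leq p \leq \infty$; the modification $\Lambda \to \Lambda^p$ for $0 < p < 1$ comes from applying the second inequality in Theorem \ref{prop3.2} (written in terms of $d_p$).

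For the second inequality, I would instead pick any $b \in B$; then $f \fc b \in f \fc B$, so
\[
\delta(B, f \fc B) \leq \norm{b - (f \fc b)}_p \leq \norm{b - f}_p + \norm{f - (f \fc b)}_p \leq \norm{b-f}_p + \frac{\Lambda}{1-\Lambda}\norm{f-b}_p = \frac{1}{1-\Lambda}\norm{f-b}_p.
\]
Another triangle inequality and the bound $\norm{b}_p \leq M_B$ give $\norm{f-b}_p \leq M_B + \norm{f}_p$, which produces the stated estimate. Again, for $0 < p < 1$ the same argument is carried out with $d_p$ in place of $\norm{\cdot}_p$ and constants modified accordingly.

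There is no real obstacle here: the whole proposition is a direct consequence of Theorem \ref{prop3.2} combined with the triangle inequality and the definition of $\delta$. The only point to be careful about is that $\delta$ is not a metric (as remarked), so the bound must be obtained by exhibiting a specific element of the second set whose distance to some element of the first set is controlled, rather than by any symmetric argument.
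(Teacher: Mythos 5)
Your proposal is correct and follows essentially the same route as the paper: both bound the infimum defining $\delta$ by restricting to the ``diagonal'' pair $(f, f\fc b)$ (respectively $(b, f\fc b)$) and then invoke Theorem \ref{prop3.2} together with the triangle inequality and the boundedness of $F$ (respectively $B$). Your explicit treatment of the second inequality, using $1+\frac{\Lambda}{1-\Lambda}=\frac{1}{1-\Lambda}$, is precisely the ``analogous'' argument the paper leaves to the reader.
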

\begin{proof} Let us consider the first inequality.
If $f, f' \in F$ then
\[\inf_{f, f' \in F} \Vert f'-f*_Tb\Vert_p \leq \inf_{f\in F} \Vert f-f*_Tb\Vert_p \leq \frac{\Lambda}{ 1 -  \Lambda} (M_F+\Vert b \Vert_p),\]
according to (\ref{eq5}). The second expression can be proved analogously.
\end{proof}

\begin{remark} Let us note that $F$ and $F*_Tb$ can be approximated by taking vector sequences of scale functions $(\alpha_m)$ whose scaling factors $\Lambda_m$ tend to zero.
\end{remark}

\begin{remark}\label{rem5.3} In order to simplify the notation we will refer to $\mathcal{L}^p(I)$ as $E,$ bearing in mind that most of the results hold for other standard spaces of functions such as $\mathcal{C}(I)$ as well.
\end{remark}

Suppose that $F, B$ are compact sets of functions. Then $\mathcal{P}^1_f(B)=f*_TB$ and $\mathcal{P}^2_b(F)=F*_Tb$  are also compact sets for any $f, b\in E$. Likewise, the set
$F*_TB=\mathcal{P}(F \times B) $ is also compact.

 The Hausdorff distance between two nonempty compact sets $H, K$ is defined as
\[d_\mathcal{H}(H,K) =\max \{\sup_{x\in H} \delta(x, K), \sup_{y\in K} \delta(y, H)\}.\]

\begin{theorem}
Let us assume that $F, F', B, B'$ are nonempty compact sets with bounds $M_F, M_{F'}, M_B$, and $M_{B'}$, respectively. Then, for any $f, b \in E$,
\[d_\mathcal{H}(F, F*_Tb) \leq  \frac{\Lambda}{ 1 - \Lambda} (M_F+\Vert b \Vert_p),\]
\[d_\mathcal{H}(B, f*_TB) \leq  \frac{1}{ 1 -  \Lambda} (M_B+\Vert f \Vert_p),\]
\[d_\mathcal{H}(F, F*_TB) \leq  \frac{\Lambda}{ 1 -  \Lambda} (M_F+M_B),\]
\[d_\mathcal{H}(B, F*_TB) \leq  \frac{1}{ 1 - \Lambda} (M_B+M_F),\]
\[d_\mathcal{H}(F*_TB, F'*_TB) \leq  \frac{1}{ 1 -  \Lambda} (M_F+M_{F'}),\]
\[d_\mathcal{H}(F*_TB, F*_TB') \leq  \frac{ \Lambda}{ 1 -  \Lambda} (M_B+M_{B'}),\]
\[d_\mathcal{H}(F*_TB, F'*_TB') \leq \frac{1}{ 1 - \Lambda} ((M_F+M_{F'})+ \Lambda(M_B+M_{B'})).\]
\end{theorem}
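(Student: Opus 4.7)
The plan is to treat all seven inequalities by the same recipe: for each Hausdorff distance $d_{\mathcal{H}}(H,K)$, produce a natural map $H \to K$ (and symmetrically $K \to H$) that associates each element of one set with a specific element of the other, bound the distance of that pair using the Lipschitz-type estimates already established in Theorems \ref{prop3.2} and \ref{prop:051018} together with the subsequent Corollary, and then take a supremum. Throughout, I will invoke compactness only through the bounds $\norm{f}_p \leq M_F$ for $f\in F$ (etc.), so the proofs for $0<p<1$ go through verbatim upon replacing $\Lambda$ by $\Lambda^p$.

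For the first four inequalities the natural pairings are the obvious ones. For $d_{\mathcal{H}}(F,F*_T b)$, send $f \mapsto f*_T b$; Theorem \ref{prop3.2} gives $\norm{f - f*_T b}_p \leq \frac{\Lambda}{1-\Lambda}\norm{f-b}_p \leq \frac{\Lambda}{1-\Lambda}(M_F + \norm{b}_p)$, and the same $f\leftrightarrow f*_Tb$ pairing handles the reverse direction. Inequality 3 ($d_{\mathcal{H}}(F,F*_T B)$) works identically: pick any $b\in B$ for $f\in F$ and pair $f*_T b \in F*_T B$ back with $f\in F$. Inequality 2 ($d_{\mathcal{H}}(B,f*_T B)$) follows with the simple triangle trick $\norm{b - f*_T b}_p \leq \norm{f-b}_p + \norm{f - f*_T b}_p \leq (1+\tfrac{\Lambda}{1-\Lambda})\norm{f-b}_p = \tfrac{1}{1-\Lambda}\norm{f-b}_p$, which then is bounded by $\frac{1}{1-\Lambda}(M_B+\norm{f}_p)$; inequality 4 is the same argument with $b\in B$ and $f\in F$ playing symmetric roles.

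For the last three inequalities I use Theorem \ref{prop:051018} directly. Given $f*_T b \in F*_T B$, pick any $f'\in F'$ and pair with $f'*_T b \in F'*_T B$: then
\begin{equation*}
\norm{f*_T b - f'*_T b}_p \leq \tfrac{1}{1-\Lambda}\norm{f-f'}_p \leq \tfrac{1}{1-\Lambda}(M_F+M_{F'}),
\end{equation*}
proving inequality 5; inequality 6 is the symmetric statement using the second estimate of Theorem \ref{prop:051018}, with factor $\Lambda/(1-\Lambda)$. Inequality 7 is then just a triangle inequality: use the Corollary (or, equivalently, insert the intermediate element $f'*_T b$) to pair $f*_T b$ with $f'*_T b'$ and read off $\norm{f*_Tb - f'*_Tb'}_p \leq \tfrac{1}{1-\Lambda}(\norm{f-f'}_p + \Lambda\norm{b-b'}_p) \leq \tfrac{1}{1-\Lambda}((M_F+M_{F'})+\Lambda(M_B+M_{B'}))$.

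There is really no substantive obstacle; the main thing to be careful about is that for each Hausdorff distance one must verify both suprema in $d_{\mathcal{H}}(H,K)=\max\{\sup_{x\in H}\delta(x,K),\sup_{y\in K}\delta(y,H)\}$. In all seven cases the chosen pairing is symmetric in the roles of the two sets (or becomes so by swapping the labels of the free elements), so the same bound applies to both suprema, and the max reduces to a single estimate. The statement for $0<p<1$ is obtained by the same computations, replacing every occurrence of $\Lambda$ by $\Lambda^p$ as signaled by Theorem \ref{prop3.2}.
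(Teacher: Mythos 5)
Your proposal is correct and follows essentially the same route as the paper: the authors prove the first inequality by exactly this pairing argument (bounding $\delta(f,F*_Tb)$ and $\delta(f*_Tb,F)$ via $\Vert f - f*_Tb\Vert_p \leq \frac{\Lambda}{1-\Lambda}(M_F+\Vert b\Vert_p)$ from Theorem \ref{prop3.2}) and state that the remaining six are analogous, which is what you carry out explicitly using Theorem \ref{prop:051018} and its Corollary. Your explicit check that both suprema in the definition of $d_\mathcal{H}$ are covered by the same symmetric pairing is a worthwhile detail that the paper leaves implicit.
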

\begin{proof} Let us prove, for instance, the first inequality.
Let $b$ be an element of $E$ and $f\in F$ then
\[
\delta(f, F*_Tb) \leq  \Vert f-f*_Tb\Vert_p \leq \frac{\Lambda}{ 1 - \Lambda} (M_F+\Vert b \Vert_p),
\]
according to (\ref{eq5}) and
\[
\delta(f*_Tb, F) \leq  \Vert f-f*_Tb\Vert_p \leq \frac{\Lambda}{ 1 -  \Lambda} (M_F+\Vert b \Vert_p).
\]
The definition of $d_\mathcal{H}$ gives the first result. (Modifications for $0<p<1.)$
The remaining inequalities are proven in a similar fashion.
\end{proof}

\section{Partial Fractal Convolutions with the Null Function}
Let us fix a partition of the nonempty interval $I = [x_0,x_N]$ and a set of scale functions satisfying $\Lambda <1$. Also recall Remark \ref{rem5.3}. The convolution of $f\in E$ or $b\in E$ with the null function $0$ defines partial operators $\cP_0^1: E \to E$ and $\cP_0^2: E \to E$ as
\begin{eqnarray}\label{eq:150615}
\mathcal{P}_0^1(b) :=\mathcal{P}(0,b)= 0*_Tb,
\end{eqnarray}
\begin{eqnarray}\label{eq:150616}
\mathcal{P}_0^2(f) :=\mathcal{P}(f,0)=f*_T0.
\end{eqnarray}
The uniqueness of the fixed point (\ref{fixp}) implies that $\mathcal{P}_0^1$ and $\mathcal{P}_0^2$ are linear operators and from inequality (\ref{eq5}) we infer that for $f=0$
\begin{equation}\label{eq:150617}
\Vert 0*_Tb \Vert_p\leq \frac{\Lambda}{1- \Lambda}\Vert b\Vert_p,
\end{equation}
and therefore
\begin{equation}\label{eq:150618}
\Vert\mathcal{P}_0^1\Vert\leq \frac{\Lambda}{1-\Lambda}.
\end{equation}
where $\Vert \cdot \Vert$ represents the operator norm with respect to $\Vert \cdot \Vert_p$ in the space $E$.

\begin{remark}
Throughout this section, results are stated in terms of the norm $\Vert \cdot \Vert_p$ with the modification $\Lambda\to\Lambda^p$ for $0<p<1$.
\end{remark}

\begin{remark}\label{rem:110519}
 The operator $\mathcal{P}_0^1$ is contractive  if $\Lambda < 1/2,$  since
 \begin{equation} \label{eq:100519}
 \Vert \mathcal{P}_0^1 \Vert \leq \frac{\Lambda}{1- \Lambda} <1.
 \end{equation}
 As $E$ is complete, there exists a unique fixed point $\widetilde{b}$
 such that $\mathcal{P}_0^1 (\widetilde{b})=0*_T\widetilde{b}=\widetilde{b}$ and this fixed point is a function attractor. Then, according to (\ref{eq:140519}),
 \[\Vert \widetilde{b} - 0\Vert_p =\Vert 0*_T \widetilde{b} - 0*_T 0 \Vert_p \leq \frac{\Lambda}{1- \Lambda} \Vert \widetilde{b} \Vert_p < \Vert \widetilde{b} \Vert_p, \]
yielding $\widetilde{b}=0.$ In this case,  for any $b\in E$, the sequence $\{ 0*_T \cdots (0*_T(0*_Tb))\}$ tends to $0$.
\end{remark}

Bearing in mind that $b*_Tb=b$,  and using (\ref{eq:140518}), gives
\begin{equation}\label{eq:150619}
\Vert 0*_Tb - b*_Tb \Vert_p\leq \frac{1}{1-\Lambda}\Vert b\Vert_p,
\end{equation}
and, therefore,
\begin{equation}\label{eq:150620}
\Vert\mathcal{P}_0^1-I \Vert\leq \frac{1}{1-\Lambda},
\end{equation}
with $I$ denoting the identity operator on $E$.

Expression (\ref{eq:140518}) also yields that
\begin{equation}\label{eq:150621}
\Vert f*_T0-0*_T0 \Vert_p\leq \frac{1}{1- \Lambda}\Vert f\Vert_p,
\end{equation}
and therefore
\begin{equation}\label{eq:150622}
\Vert\mathcal{P}_0^2\Vert\leq \frac{1}{1- \Lambda}.
\end{equation}
Thus, both operators $\mathcal{P}_0^1$ and  $\mathcal{P}_0^2$ are linear and bounded, hence continuous on $E$.\\
Moreover, by (\ref{eq:140519})
\begin{equation}\label{eq:150623}
\Vert f*_T0 -f *_T f \Vert_p\leq \frac{\Lambda}{1- \Lambda}\Vert f\Vert_p,
\end{equation}
and
\begin{equation}\label{eq:150624}
\Vert\mathcal{P}_0^2-I \Vert\leq \frac{\Lambda}{1-\Lambda}.
\end{equation}
According to (\ref{fixp}),
\begin{equation}\label{eq:140522}
\Vert f *_T 0 - f \Vert_p \leq \Lambda \Vert f *_T 0  \Vert_p,\end{equation}
and thus
\begin{equation}\label{eq:160615}
\Vert\mathcal{P}_0^2-I \Vert\leq  \Lambda \Vert\mathcal{P}_0^2 \Vert.
\end{equation}

\subsection{Further Properties of  $\mathcal{P}_0^1$}

Here, we consider some additional properties of $\mathcal{P}_0^1$ and, in particular, the property of generating fractal bases for $E$.
\begin{proposition}\label{prop:160617}
If $\alpha_n$ is non-null almost everywhere for any $n$ then $\mathcal{P}_0^1$ is injective.
\end{proposition}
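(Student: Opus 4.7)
The plan is to use linearity of $\mathcal{P}_0^1$ to reduce injectivity to the statement $\ker \mathcal{P}_0^1 = \{0\}$, and then exploit the self-referential equation (\ref{fixp}) to force $b = 0$ a.e.

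First I would note that $\mathcal{P}_0^1$ was already shown to be linear and bounded, so injectivity is equivalent to showing that $0 *_T b = 0$ in $\mathcal{L}^p(I)$ implies $b = 0$ a.e. on $I$. To this end, I would substitute $f = 0$ and $\widetilde{f} = 0 *_T b = 0$ into the fixed point equation (\ref{fixp}): on each subinterval $L_n(X) = I_n$,
\[
0 = 0 + (\alpha_n \circ L_n^{-1})(x)\,\bigl((0 *_T b) - b\bigr)\circ L_n^{-1}(x) = -(\alpha_n \circ L_n^{-1})(x)\,(b \circ L_n^{-1})(x),
\]
for almost every $x \in I_n$.

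Next, since each $L_n$ is an affine bijection $I \to I_n$ (and hence maps null sets to null sets and vice versa), the change of variable $y = L_n^{-1}(x)$ yields
\[
\alpha_n(y)\, b(y) = 0 \quad \text{for a.e. } y \in I,
\]
for every $n \in \N_N$. By hypothesis $\alpha_n \neq 0$ almost everywhere on $I$, so the set $\{y \in I : \alpha_n(y) = 0\}$ has measure zero. Therefore $b(y) = 0$ for a.e. $y \in I$, i.e., $b = 0$ in $\mathcal{L}^p(I)$.

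There is no real obstacle here: the argument is essentially a direct unpacking of the self-referential equation combined with the hypothesis on $\alpha_n$. The only mild subtlety is making sure the change of variable under the affine map $L_n$ is handled correctly (null sets and a.e.\ statements transfer properly because $L_n$ has a nonzero constant Jacobian), and that a single a.e.\ statement on $I_n$ translates back to an a.e.\ statement on $I$; both are routine for affine bijections on intervals.
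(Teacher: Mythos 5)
Your proof is correct and follows essentially the same route as the paper's: assume $0 *_T b = 0$, plug into the fixed point equation (\ref{fixp}) to get $\alpha_n b = 0$ a.e., and conclude $b = 0$ from the hypothesis on $\alpha_n$. Your version is merely more careful than the paper's about the change of variables under $L_n$ and the almost-everywhere bookkeeping, which is a welcome addition but not a different argument.
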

\begin{proof} Suppose $0*_Tb=0$. By the fixed point equation (\ref{fixp}) and for a $t\in I$ such that $L_n(t) \in L_n(I)$, we have
\[\alpha_n(t)b(t)=0,\]
and thus $b=0.$
\end{proof}


\begin{proposition} \label{prop:140521}
Let $b \in E$ and $\alpha_n(t)=\pm \Lambda$, for all $n$ and $t\in I$. Then,
\begin{equation}\label{eq:140520}
\Vert 0*_Tb \Vert_p= \Lambda  \Vert 0*_Tb -b\Vert_p.\end{equation}
If the maps $\alpha_n$ are constant functions and  $b \in E$,
\begin{equation} \label{eq:110520}
\Vert 0*_Tb \Vert_p \leq \Lambda \Vert 0*_Tb -b\Vert_p,\end{equation}
and therefrom
\begin{equation}\label{eq:160616}
\Vert\mathcal{P}_0^1 \Vert\leq  \Lambda \Vert\mathcal{P}_0^1-I \Vert.
\end{equation}
\end{proposition}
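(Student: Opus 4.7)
The plan is to work directly from the fixed point equation (\ref{fixp}) specialized to seed function $f = 0$, which reads
\[
g(x) := (0 \fc b)(x) = (\alpha_n \circ L_n^{-1})(x)\cdot (g - b)\circ L_n^{-1}(x), \quad x \in L_n(I),\ n \in \N_N.
\]
All the work will then consist of integrating $|g|^p$ piece-by-piece over $L_n(I)$ and changing variables via $x = L_n(t)$. Since $L_n(x) = a_n x + b_n$ with $L_n(x_0) = x_{n-1}$ and $L_n(x_N) = x_n$, one has $a_n = (x_n - x_{n-1})/(x_N - x_0) > 0$, and summing telescopically gives $\sum_{n=1}^N a_n = 1$. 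This normalization is the key arithmetic fact that makes the argument collapse cleanly.

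For the first claim, I would use the partition property (\ref{p}) to write
\[
\norm{g}_p^p = \sum_{n=1}^N \int_{L_n(I)} |g(x)|^p\,dx = \sum_{n=1}^N a_n \int_I |\alpha_n(t)|^p\,|g(t) - b(t)|^p\,dt.
\]
Under the hypothesis $\alpha_n(t) = \pm\Lambda$, the factor $|\alpha_n(t)|^p$ equals $\Lambda^p$ identically, so it pulls out of the inner integral and leaves $\Lambda^p \norm{g - b}_p^p \sum a_n = \Lambda^p \norm{g - b}_p^p$. Taking a $p$-th root yields the equality (\ref{eq:140520}). The case $p = \infty$ is handled separately but in the same spirit: the supremum over $I$ equals the supremum of the suprema over each $L_n(I)$, each of which factors as $\Lambda \cdot \sup_t |g(t) - b(t)|$. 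The modification for $0 < p < 1$ stated in the preceding remark replaces $\Lambda$ by $\Lambda^p$ throughout but does not change the structure of the computation.

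For the second claim, the hypothesis weakens to $\alpha_n \equiv c_n \in \R$ with $|c_n| \leq \Lambda$. The same change of variables gives
\[
\norm{g}_p^p = \sum_{n=1}^N a_n |c_n|^p\,\norm{g - b}_p^p \leq \Lambda^p \left(\sum_{n=1}^N a_n\right)\norm{g - b}_p^p = \Lambda^p \norm{g - b}_p^p,
\]
which after a $p$-th root yields (\ref{eq:110520}). The operator norm bound (\ref{eq:160616}) then follows formally: write $\norm{\mathcal{P}_0^1(b)}_p \leq \Lambda\,\norm{(\mathcal{P}_0^1 - I)(b)}_p \leq \Lambda\,\norm{\mathcal{P}_0^1 - I\,}\,\norm{b}_p$ and take the supremum over $b$ with $\norm{b}_p \leq 1$.

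I do not see a substantive obstacle; the whole argument is essentially the observation that the affine contractions $L_n$ partition $I$ with measure weights $a_n$ summing to one, so the self-referential equation for $g = 0 \fc b$ turns the computation of $\norm{g}_p^p$ into a convex combination of copies of $|\alpha_n|^p \norm{g - b}_p^p$. The only point requiring a small amount of care is the separate treatment of the endpoint case $p = \infty$ (where one uses $\esssup$ rather than an integral) and the sub-additivity modification for $0 < p < 1$ flagged in the preceding remark.
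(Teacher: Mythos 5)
Your proposal is correct and follows essentially the same route as the paper: the paper's proof simply declares (\ref{eq:140520}) and (\ref{eq:110520}) to be consequences of the fixed point equation (\ref{fixp}), and your piecewise integration over the $L_n(I)$ with the change of variables $x=L_n(t)$ and the identity $\sum_{n=1}^N a_n=1$ is precisely the computation being invoked (the same device the authors spell out in the proof of Theorem \ref{prop:051018}). Your handling of the $p=\infty$ and $0<p<1$ cases and the passage to the operator-norm inequality (\ref{eq:160616}) are also as intended.
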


\begin{proof}  The expressions (\ref{eq:140520}) and (\ref{eq:110520}) are consequence of Equation (\ref{fixp}).
\end{proof}


\begin{proposition}\label{prop:160618}
Suppose that $\alpha_n(t)=\pm \Lambda$, for all $n$ and $t\in I$. Then $\mathcal{P}_0^1$ has closed range.
\end{proposition}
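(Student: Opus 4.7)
The plan is to show that $\mathcal{P}_0^1$ is bounded below on $E$ (modulo kernel) and then invoke the standard fact that a bounded-below linear operator between Banach spaces has closed range.

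First, I would observe that Proposition \ref{prop:140521} gives the \emph{equality} $\|0*_Tb\|_p = \Lambda\,\|0*_Tb - b\|_p$ under the hypothesis $\alpha_n(t)=\pm\Lambda$. Applying the reverse triangle inequality to the right-hand side yields
\begin{equation*}
\|0*_Tb\|_p \;\geq\; \Lambda\bigl(\|b\|_p - \|0*_Tb\|_p\bigr),
\end{equation*}
which rearranges to the lower bound
\begin{equation*}
\|b\|_p \;\leq\; \frac{1+\Lambda}{\Lambda}\,\|\mathcal{P}_0^1 b\|_p,
\end{equation*}
valid for every $b\in E$ provided $\Lambda>0$. (If $\Lambda=0$, then $\mathcal{P}_0^1\equiv 0$ and its range $\{0\}$ is trivially closed, so this case can be dismissed separately.)

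Next I would exploit this bound together with linearity. Let $(b_m)\subset E$ be a sequence such that $\mathcal{P}_0^1 b_m \to y$ in $E$. Then $(\mathcal{P}_0^1 b_m)$ is Cauchy, and by linearity combined with the lower bound
\begin{equation*}
\|b_m - b_k\|_p \;\leq\; \frac{1+\Lambda}{\Lambda}\,\|\mathcal{P}_0^1 b_m - \mathcal{P}_0^1 b_k\|_p,
\end{equation*}
so $(b_m)$ is Cauchy as well. Completeness of $E=\mathcal{L}^p(I)$ produces a limit $b\in E$ with $b_m\to b$. Continuity of $\mathcal{P}_0^1$ (established earlier by the bound $\|\mathcal{P}_0^1\|\le \Lambda/(1-\Lambda)$) then gives $\mathcal{P}_0^1 b_m \to \mathcal{P}_0^1 b$, whence $y=\mathcal{P}_0^1 b$ belongs to the range. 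Therefore the range of $\mathcal{P}_0^1$ is closed.

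The only subtle point is the appeal to Proposition \ref{prop:140521}, which crucially uses the constancy of $|\alpha_n|$; everything else is the standard "bounded below $\Rightarrow$ closed range" argument. For $0<p<1$ the same reasoning applies with $\Lambda$ replaced by $\Lambda^p$ and the norm replaced by the metric $d_p$, using the translation-invariant analogue of the reverse triangle inequality.
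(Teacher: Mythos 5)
Your argument is correct and follows essentially the same route as the paper: both derive the lower bound $\|b\|_p \leq \frac{1+\Lambda}{\Lambda}\|\mathcal{P}_0^1(b)\|_p$ from the equality of Proposition \ref{prop:140521} via the reverse triangle inequality, then run the standard Cauchy-sequence argument using linearity, completeness, and continuity, treating $\Lambda=0$ separately as the null-operator case.
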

\begin{proof} The previous result (\ref{eq:140520}) provides the inequality
\[\Lambda (\Vert b\Vert_p - \Vert 0*_Tb\Vert_p) \leq \Vert 0*_Tb\Vert_p,\]
and thus
\begin{equation}\label{eq:150518}
\Vert b\Vert_p \leq \frac{1+\Lambda }{ \Lambda } \Vert 0*_Tb\Vert_p= \frac{1+\Lambda }{\Lambda}\Vert \mathcal{P}_0^1(b)\Vert_p,
\end{equation}
if $\Lambda\neq 0$.
Given a sequence $\mathcal{P}_0^1 (b_m)$ converging to $\overline{b}$, the linearity of $\mathcal{P}_0^1$ implies that
\[\Vert b_m -b_n \Vert_p \leq \frac{1+\Lambda }{\Lambda}\Vert \mathcal{P}_0^1(b_m)-\mathcal{P}_0^1(b_n)\Vert_p.\]
Hence, $(b_m)$ is a Cauchy sequence and $b_m \rightarrow b.$ The continuity of $\mathcal{P}_0^1$ implies that $\overline{b}=\mathcal{P}_0^1(b).$\\
If $\Lambda$ is zero then it follows from (\ref{eq:160616}) that $\mathcal{P}_0^1$ is the null operator.
\end{proof}

\begin{figure}[h]
\includegraphics[angle=0, width=1\textwidth]{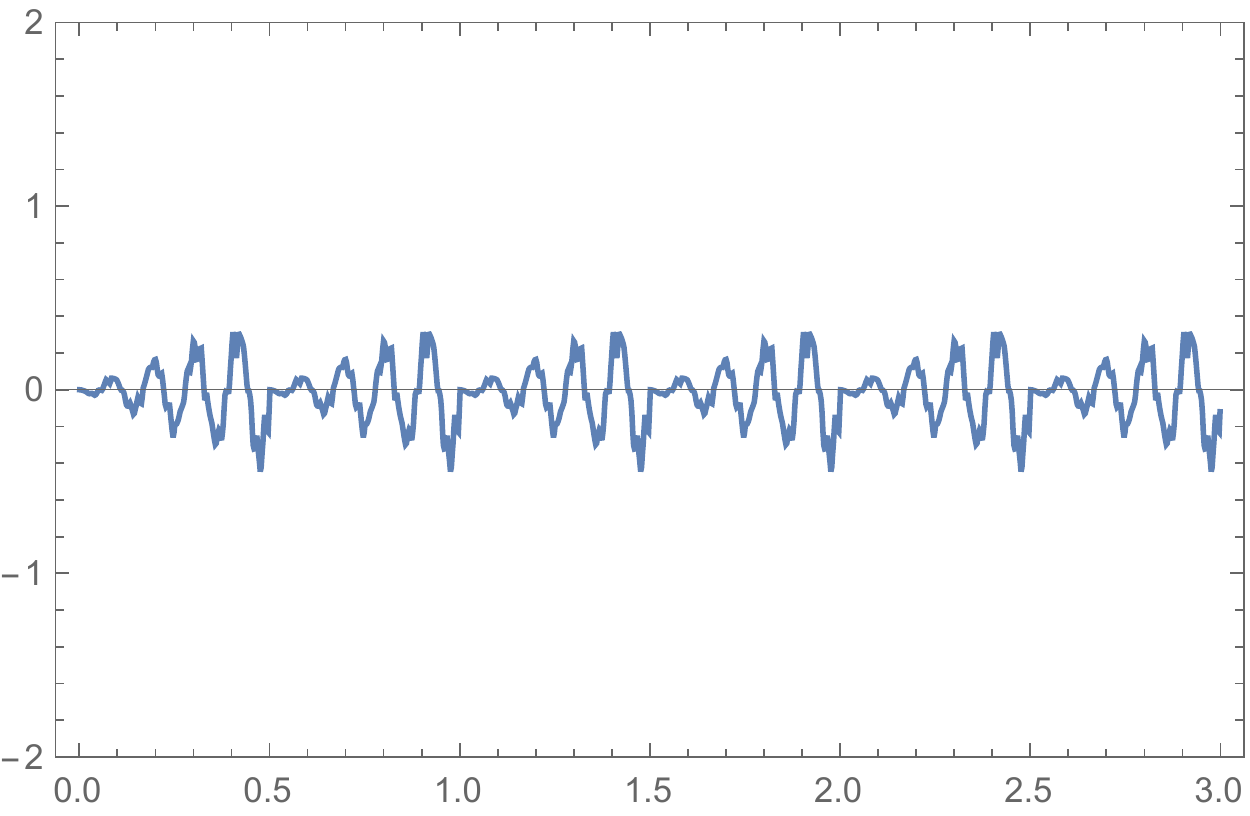} \caption{\small Graph of the fractal convolution
$0*_T\sin(3 \pi x)$, with respect to a uniform  partition with $N=6$,
and  scale functions $\alpha_n(x)=x/8$ for $n=1, 2, \ldots, 6$ in the interval $I=[0,3]$.}
\end{figure}

\subsection{Fractal Convolution and Schauder Bases}
Here, we consider the generation of fractal-like bases via the operator $\cP_0^1$. To this end, we refer to, e.g., \cite{10, 19}, for the definitions of Schauder basis, Riesz sequences and bases, and Bessel sequences. As it is well-known, these systems provide spanning families for Banach and Hilbert spaces.

\begin{definition}
A sequence $(x_m)$ in a Banach space $(E, \Vert \cdot \Vert)$ is called a \emph{Schauder sequence} if it is a Schauder basis for the closed span of $(x_m)$ (usually denoted by $[x_m].$)
\end{definition}

\begin{proposition}\label{prop:160619}
Suppose that $\alpha_n(t)=\pm \Lambda\neq 0$, for all $n$ and for all $t\in I$. If $(b_m)$ is a Schauder basis of $E$ then $(0*_Tb_m)$ is a Schauder sequence of $E$.
\end{proposition}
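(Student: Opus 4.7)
The plan is to view $\cP_0^1$ as a topological isomorphism from $E$ onto its (closed) range and then transport the Schauder basis $(b_m)$ across this isomorphism.

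First, I would collect the structural facts about $\cP_0^1$ that are already available in the excerpt under the hypothesis $\alpha_n(t) = \pm\Lambda \neq 0$. By Proposition~\ref{prop:160617}, $\cP_0^1$ is injective; by Proposition~\ref{prop:160618}, its range $R:=\cP_0^1(E)$ is closed in $E$; and the key quantitative estimate (\ref{eq:150518}) gives
\[
\norm{b}_p \leq \frac{1+\Lambda}{\Lambda}\, \norm{\cP_0^1(b)}_p, \qquad \forall\, b\in E,
\]
which says that $\cP_0^1$ is bounded below. Combined with the boundedness of $\cP_0^1$ (estimate (\ref{eq:150618})), this makes $\cP_0^1: E \to R$ a continuous linear bijection with continuous inverse $(\cP_0^1)^{-1}: R \to E$, i.e.\ a topological isomorphism onto $R$.

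Next, I would identify $R$ with the closed span $[0\fc b_m]$. Since $R$ is closed and contains every $0\fc b_m = \cP_0^1(b_m)$, one inclusion is immediate: $[0\fc b_m] \subseteq R$. For the reverse inclusion, let $x = \cP_0^1(y)\in R$. Since $(b_m)$ is a Schauder basis of $E$, the partial sums $y_k := \sum_{m=1}^k c_m b_m$ converge to $y$ in $E$; by continuity of $\cP_0^1$, $\cP_0^1(y_k) = \sum_{m=1}^k c_m (0\fc b_m)$ converges to $x$, so $x\in [0\fc b_m]$. Hence $R = [0\fc b_m]$.

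Finally, I would establish the Schauder-basis property of $(0\fc b_m)$ for $R$. Given $x\in R$, set $y := (\cP_0^1)^{-1}(x)$; the Schauder expansion $y = \sum_m c_m b_m$ together with continuity of $\cP_0^1$ yields
\[
x = \cP_0^1(y) = \sum_{m=1}^\infty c_m (0\fc b_m),
\]
which shows existence of the expansion. For uniqueness of the coefficients, suppose $x = \sum_m d_m (0\fc b_m)$; applying the continuous operator $(\cP_0^1)^{-1}$ to the partial sums gives $y = \sum_m d_m b_m$, and uniqueness of the Schauder expansion of $y$ in $E$ forces $d_m = c_m$. The coefficient functionals $x\mapsto c_m$ on $R$ are the compositions of the coefficient functionals of $(b_m)$ with $(\cP_0^1)^{-1}$, hence continuous. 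Thus $(0\fc b_m)$ is a Schauder basis of its closed span $R$, i.e.\ a Schauder sequence of $E$.

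The only place where the hypothesis $\alpha_n(t)=\pm\Lambda\ne 0$ is really used is to secure the lower bound (\ref{eq:150518}) and the injectivity (\ref{prop:160617}); everything else is a routine transport-of-structure argument, and I do not expect any genuine obstacle. The cleanest way to present it is simply to record that $\cP_0^1$ is a linear homeomorphism $E \to R$ and invoke the standard fact that such maps carry Schauder bases to Schauder bases.
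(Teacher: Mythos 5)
Your proposal is correct and follows essentially the same route as the paper, which likewise deduces from Propositions~\ref{prop:160617} and \ref{prop:160618} that $\mathcal{P}_0^1$ is a topological isomorphism onto its range and hence carries the Schauder basis $(b_m)$ to a Schauder basis of that range. You merely make explicit the transport-of-structure details (identification of the range with $[0\fc b_m]$, existence and uniqueness of expansions) that the paper leaves as a standard fact.
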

\begin{proof} Propositions \ref{prop:160617} and \ref{prop:160618} imply that $\mathcal{P}_0^1$ is a topological isomorphism on its range and thus it transforms bases into bases.
\end{proof}
\begin{proposition}\label{prop:160620}
If $\Lambda < 1/2$ and $(b_m)$ is a Schauder basis of $E$, then $(b_m - 0*_Tb_m)$ is a Schauder basis of $E$. Furthermore, if the $\alpha_n$ are constant functions and $(b_m)$ is a bounded Schauder basis of $E$ then $(0*_Tb_m)$ is also bounded.
\end{proposition}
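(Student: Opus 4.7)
The strategy is to recognize $b_m - 0 *_T b_m$ as the image of $b_m$ under the operator $I - \cP_0^1$ and to show that $I - \cP_0^1$ is a topological isomorphism of $E$; the first claim then follows from the standard fact that topological isomorphisms map Schauder bases to Schauder bases. The second claim will be an immediate consequence of the boundedness of $\cP_0^1$.

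First, I would use the bound (\ref{eq:150618}), namely $\norm{\cP_0^1}\le \Lambda/(1-\Lambda)$. The hypothesis $\Lambda<1/2$ gives $\Lambda/(1-\Lambda)<1$, hence $\norm{\cP_0^1}<1$. Since $E$ is a Banach space and $\cP_0^1\in \mathcal{B}(E)$ is a bounded linear operator of norm strictly less than one, the Neumann series
\[
(I-\cP_0^1)^{-1}=\sum_{k=0}^{\infty}(\cP_0^1)^k
\]
converges in the operator norm. Therefore $I-\cP_0^1$ is a topological isomorphism of $E$ onto itself.

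Now I would invoke the classical stability result: if $T:E\to E$ is a topological isomorphism of a Banach space $E$ and $(b_m)$ is a Schauder basis of $E$, then $(Tb_m)$ is a Schauder basis of $E$. Applying this with $T=I-\cP_0^1$ and using $Tb_m=b_m-\cP_0^1(b_m)=b_m-0*_T b_m$ yields the first assertion.

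For the second claim, suppose the $\alpha_n$ are constant functions and $(b_m)$ is a bounded Schauder basis, i.e.\ $M:=\sup_m \norm{b_m}_p<\infty$. Since $\cP_0^1$ is linear and bounded on $E$ (by (\ref{eq:150618})), we have
\[
\norm{0*_T b_m}_p=\norm{\cP_0^1(b_m)}_p\le \norm{\cP_0^1}\,\norm{b_m}_p\le \frac{\Lambda}{1-\Lambda}\,M,
\]
so $(0*_T b_m)$ is uniformly bounded in $E$. (If one interprets ``bounded basis'' to require also a positive lower bound on the norms, the hypothesis that the $\alpha_n$ are constant enters: Proposition \ref{prop:160618}, more precisely the lower estimate (\ref{eq:150518}), gives $\norm{0*_T b_m}_p\ge \frac{\Lambda}{1+\Lambda}\norm{b_m}_p$, which together with the assumed lower bound on $\norm{b_m}_p$ produces a uniform lower bound on $\norm{0*_T b_m}_p$.) I expect no serious obstacle here: the only subtlety is the standard basis-preservation lemma used in step three, which is a well-known consequence of expanding $x=\sum c_m b_m$ and applying the continuous linear bijection $I-\cP_0^1$ together with its continuous inverse.
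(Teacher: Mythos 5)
Your proof of the first assertion coincides with the paper's: both rest on $\Vert\mathcal{P}_0^1\Vert\le\Lambda/(1-\Lambda)<1$ for $\Lambda<1/2$, so that $I-\mathcal{P}_0^1$ is a topological isomorphism of $E$ (you make the Neumann series explicit, the paper cites Remark \ref{rem:110519}) and therefore carries the Schauder basis $(b_m)$ to the Schauder basis $(b_m-0*_Tb_m)$.

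The second assertion is where you and the paper part ways, and where your argument has a gap. You read the claim literally as boundedness of $(0*_Tb_m)$; the upper bound you give is fine (and needs neither $\Lambda<1/2$ nor constancy of the $\alpha_n$), but the lower bound in your parenthetical relies on inequality (\ref{eq:150518}), which the paper establishes only under the hypothesis $\alpha_n(t)=\pm\Lambda$ of Proposition \ref{prop:160618} --- strictly stronger than ``the $\alpha_n$ are constant.'' For merely constant scale functions the conclusion you are after is in fact false: with $\alpha_n\equiv 0$ for all $n$ one has $0*_Tb_m=0$ for every $m$, so no uniform positive lower bound on $\Vert 0*_Tb_m\Vert_p$ can exist. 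The paper's own proof makes clear that the intended object is the basis $(b_m-0*_Tb_m)$ produced in the first part: it proves $\Vert b_m-0*_Tb_m\Vert_p\le K/(1-\Lambda)$ from (\ref{eq:140518}) and, using (\ref{eq:110520}) (which \emph{does} hold for constant $\alpha_n$), the lower bound $(1+\Lambda)\Vert b_m-0*_Tb_m\Vert_p\ge\Vert b_m\Vert_p\ge k$. So the statement as printed appears to contain a slip ($(0*_Tb_m)$ in place of $(b_m-0*_Tb_m)$); to match the proof you should estimate $\Vert b_m-0*_Tb_m\Vert_p\ge\Vert b_m\Vert_p-\Lambda\Vert 0*_Tb_m-b_m\Vert_p$, which is exactly where the constancy of the $\alpha_n$ enters.
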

\begin{proof} In this case, $\Vert \mathcal{P}_0^1 \Vert < 1$ (Remark \ref{rem:110519}) and $I- \mathcal{P}_0^1$ is a topological isomorphism preserving bases.\\
If $(b_m)$ is bounded, there exist by definition positive constants $k$ and $K$  such that for all $m$,
\[k\leq \Vert b_m \Vert_p \leq K.\]
For the fractal basis $(0\fc b_m)$, we have by (\ref{eq:140518}),
\begin{equation}\label{eq:140527}
\Vert b_m - 0*_T b_m \Vert_p \leq   \frac{1}{1-\Lambda}\Vert b_m \Vert_p \leq \frac{K}{1-\Lambda}.\end{equation}
For the lower bound, one obtains, using (\ref{eq:110520}),
\[\Vert b_m - 0*_T b_m \Vert_p \geq \Vert b_m \Vert_p  - \Vert 0*_T b_m \Vert_p \geq \Vert b_m \Vert_p - \Lambda \Vert 0*_T b_m -b_m \Vert_p, \]
and therefore,
\begin{equation}\label{eq:160624}
(1+\Lambda) \Vert b_m - 0*_T b_m \Vert_p \geq  \Vert b_m \Vert_p \geq k.\end{equation}
\end{proof}

\begin{theorem}\label{prop:16062}
For any $\Lambda < 1$, $I-\mathcal{P}_0^1$ is injective and has closed range.
\end{theorem}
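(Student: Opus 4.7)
My plan is to prove both injectivity and closed range in a single stroke by establishing that $I - \mathcal{P}_0^1$ is bounded below: there exists a constant $c > 0$ such that $\|(I - \mathcal{P}_0^1)(b)\|_p \geq c\,\|b\|_p$ for every $b \in E$. A bounded linear operator on a Banach space satisfying such a lower bound is automatically injective and has closed range, which is precisely the content of the theorem.

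The crux of the argument is the key inequality
\[
\|\mathcal{P}_0^1(b)\|_p \leq \Lambda \, \|\mathcal{P}_0^1(b) - b\|_p
\]
for arbitrary (not necessarily constant) scale functions $\alpha_n$; Proposition \ref{prop:140521} only covers the case $\alpha_n \equiv \pm\Lambda$. Setting $h := 0 \fc b$ and applying the fixed-point equation (\ref{fixp}) with $f = 0$, I obtain on each subinterval $I_n = L_n(I)$ the pointwise identity $h(x) = \alpha_n(L_n^{-1}(x))\, (h - b)(L_n^{-1}(x))$, whence
\[
|h(x)|^p \leq \Lambda^p \, |(h - b)(L_n^{-1}(x))|^p.
\]
Integrating over $I_n$ and applying the change of variables $y = L_n^{-1}(x)$ with Jacobian $a_n$ yields $\int_{I_n}|h|^p \leq \Lambda^p a_n \|h - b\|_p^p$. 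Summing over $n$ and using $\sum_{n=1}^N a_n = 1$ (which follows from $\{L_n(I)\}$ partitioning $I$, since $\sum_n a_n (x_N - x_0) = \sum_n (x_n - x_{n-1}) = x_N - x_0$) produces $\|h\|_p^p \leq \Lambda^p \|h - b\|_p^p$.

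Combining this inequality with the triangle inequality $\|b\|_p \leq \|b - h\|_p + \|h\|_p$ yields
\[
\|b\|_p \leq (1 + \Lambda)\, \|(I - \mathcal{P}_0^1)(b)\|_p,
\]
the desired bounded-below estimate with $c = 1/(1+\Lambda)$. For $0 < p < 1$, the subadditivity $|\xi + \eta|^p \leq |\xi|^p + |\eta|^p$ replaces the triangle inequality and leads to the analogous bound $\|b\|_p^p \leq (1 + \Lambda^p)\,\|(I - \mathcal{P}_0^1)(b)\|_p^p$. Injectivity is immediate: if $(I - \mathcal{P}_0^1)(b) = 0$ then $\|b\|_p = 0$, so $b = 0$. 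For closed range, any sequence $(I - \mathcal{P}_0^1)(b_m)$ converging to some $g \in E$ comes from a Cauchy sequence $\{b_m\}$ (apply the lower bound to $b_m - b_k$), which converges in the complete space $E$ to some $b$, and continuity of $I - \mathcal{P}_0^1$ forces $g = (I - \mathcal{P}_0^1)(b)$ to lie in the range.

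The principal obstacle is the generalization of the key inequality from the constant-scale case of (\ref{eq:110520}) to arbitrary bounded $\alpha_n$; once this is in hand, the remainder is a routine application of the triangle inequality and standard functional-analytic reasoning.
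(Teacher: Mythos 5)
Your proof is correct and follows essentially the same route as the paper: a lower bound $\|b\|_p \le (1+\Lambda)\,\|(I-\mathcal{P}_0^1)(b)\|_p$ (the paper's inequality (\ref{eq:160624})) combined with the Cauchy-sequence argument recycled from Proposition \ref{prop:160618}. Your one genuine improvement is deriving the key estimate $\|0\fc b\|_p \le \Lambda\,\|0\fc b - b\|_p$ directly from the fixed-point equation (\ref{fixp}) via a change of variables for \emph{arbitrary} bounded scale functions, whereas the paper leans on (\ref{eq:110520}), which is stated only for constant $\alpha_n$ --- so your version actually justifies the theorem's unrestricted hypothesis ``for any $\Lambda<1$.''
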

\begin{proof}
This follows from inequality (\ref{eq:160624}) and the proof of Proposition \ref{prop:160618}.
\end{proof}
As a consequence of the above results, we obtain
\begin{corollary}\label{prop:160624}
If $(b_m)$ is a Schauder basis then $(b_m-0*_Tb_m)$ is a Schauder sequence.
\end{corollary}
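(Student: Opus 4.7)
The plan is to reduce the corollary to the standard fact, already invoked in the proof of Proposition \ref{prop:160619}, that a topological isomorphism from a Banach space onto its image carries a Schauder basis to a Schauder basis of the image. Accordingly, my goal is to verify that $I - \mathcal{P}_0^1$ is such a topological isomorphism from $E$ onto its range, and I want this conclusion for every $\Lambda < 1$ since the corollary imposes no extra smallness assumption.

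First I would observe that $I - \mathcal{P}_0^1 : E \to E$ is a bounded linear operator; this is immediate from the estimate (\ref{eq:150620}), or equivalently from the triangle inequality together with the bound (\ref{eq:150618}) on $\mathcal{P}_0^1$. Theorem \ref{prop:16062} then supplies the two remaining ingredients: $I - \mathcal{P}_0^1$ is injective and has closed range $R := (I - \mathcal{P}_0^1)(E) \subseteq E$. Since $R$ is a closed subspace of the Banach space $E$, it is itself a Banach space under the inherited norm, so the bounded inverse theorem applied to the bounded bijection $I - \mathcal{P}_0^1 : E \to R$ yields a bounded two-sided inverse, exhibiting $I - \mathcal{P}_0^1$ as a topological isomorphism from $E$ onto $R$.

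To finish, I would transport the Schauder basis $(b_m)$ through this isomorphism: the image sequence $((I - \mathcal{P}_0^1) b_m) = (b_m - 0 \fc b_m)$ is then a Schauder basis of $R$, since expansion coefficients are preserved under application of both the isomorphism and its bounded inverse. Because the image of the dense span of $(b_m)$ is dense in $R$, the subspace $R$ coincides with the closed linear span $[b_m - 0 \fc b_m]$ of the image sequence in $E$, so $(b_m - 0 \fc b_m)$ is by definition a Schauder sequence of $E$. I do not anticipate a real obstacle: the entire analytical content has been extracted into Theorem \ref{prop:16062}, and the final deduction only invokes the bounded inverse theorem and the routine stability of Schauder bases under topological isomorphisms.
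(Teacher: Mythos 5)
Your proposal is correct and follows essentially the same route the paper intends: the corollary is stated as an immediate consequence of Theorem \ref{prop:16062}, and you simply make explicit the standard chain (injectivity $+$ closed range $\Rightarrow$ topological isomorphism onto the range by the bounded inverse theorem $\Rightarrow$ the image of a Schauder basis is a Schauder basis of the closed span). No gaps; the details you fill in are exactly those the paper leaves implicit.
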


\subsection{Fractal Convolution and the Hilbert Space $\mathcal{L}^2(I)$}

The existence of an inner product generates new ways of constructing systems of fractal functions.

\begin{proposition}\label{prop:160621}
Suppose $(b_m)$ is an orthonormal basis of $\mathcal{L}^2(I)$. Then $(0*_T b_m)$ is a Bessel sequence in $\mathcal{L}^2(I)$.
\end{proposition}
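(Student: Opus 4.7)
The plan is to exploit the boundedness and linearity of the operator $\mathcal{P}_0^1 : \mathcal{L}^2(I) \to \mathcal{L}^2(I)$, which was established in Section 6 together with the explicit bound $\|\mathcal{P}_0^1\| \leq \Lambda/(1-\Lambda)$. The Bessel property for $(0\fc b_m) = (\mathcal{P}_0^1 b_m)$ should then follow from the standard principle that a bounded linear image of an orthonormal basis is automatically a Bessel sequence.

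First, I would fix an arbitrary $f \in \mathcal{L}^2(I)$ and rewrite the inner products using the adjoint:
\begin{equation}
\langle f, 0 \fc b_m \rangle = \langle f, \mathcal{P}_0^1 b_m \rangle = \langle (\mathcal{P}_0^1)^* f, b_m \rangle.
\end{equation}
Since $\mathcal{P}_0^1$ is a bounded operator on a Hilbert space, its Hilbert-space adjoint $(\mathcal{P}_0^1)^*$ exists, is bounded, and satisfies $\|(\mathcal{P}_0^1)^*\| = \|\mathcal{P}_0^1\|$.

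Next, because $(b_m)$ is an orthonormal basis of $\mathcal{L}^2(I)$, Parseval's identity applied to the vector $(\mathcal{P}_0^1)^* f$ gives
\begin{equation}
\sum_m |\langle f, 0 \fc b_m\rangle|^2 \;=\; \sum_m |\langle (\mathcal{P}_0^1)^* f, b_m\rangle|^2 \;=\; \|(\mathcal{P}_0^1)^* f\|_2^2 \;\leq\; \|\mathcal{P}_0^1\|^2 \, \|f\|_2^2.
\end{equation}
Combining this with the bound from (\ref{eq:150618}), one obtains
\begin{equation}
\sum_m |\langle f, 0 \fc b_m\rangle|^2 \;\leq\; \frac{\Lambda^2}{(1-\Lambda)^2}\, \|f\|_2^2,
\end{equation}
which is exactly the defining Bessel inequality with bound $B = \Lambda^2/(1-\Lambda)^2$.

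There is essentially no obstacle beyond invoking the adjoint and Parseval; the whole argument rests on having $\mathcal{P}_0^1$ bounded and linear on a Hilbert space, both of which are already in hand. The only small caveat worth mentioning is that one needs the $p=2$ version of (\ref{eq:150618}), but this is the case explicitly covered in Section 6, so no modification is required.
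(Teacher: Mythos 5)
Your proof is correct and is essentially the paper's own argument: the paper defers to Proposition 2.18 of reference [10], but the identical adjoint-plus-Parseval computation appears verbatim later in the paper for the analogous statement about $(f_m\fc 0)$ and $\mathcal{P}_0^2$, and your use of $\Vert\mathcal{P}_0^1\Vert\leq\Lambda/(1-\Lambda)$ to get the explicit Bessel bound $\Lambda^2/(1-\Lambda)^2$ is a valid sharpening. No gaps.
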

\begin{proof}
The proof is similar to that of \cite[Proposition 2.18]{10}.
\end{proof}

\begin{proposition}\label{prop:160622}
Assume that $\alpha_n(t)=\pm \Lambda \neq 0$, for all $n$ and $t\in I,$ and that
 $(b_m)$ is an orthonormal basis of $\mathcal{L}^2(I)$. Then $(0*_Tb_m)$ is a Riesz sequence in $\mathcal{L}^2(I)$.
\end{proposition}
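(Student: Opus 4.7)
I will verify the two Riesz inequalities directly for finitely supported scalar sequences $(c_m)$ by exploiting that $(0*_T b_m) = (\mathcal{P}_0^1 b_m)$ is the image of an orthonormal basis under the bounded linear operator $\mathcal{P}_0^1$. The upper bound follows from boundedness of $\mathcal{P}_0^1$, and under the assumption $\alpha_n \equiv \pm\Lambda \neq 0$, the lower bound follows from the quantitative closed-range estimate proved inside Proposition~\ref{prop:160618}.

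\textbf{Step 1 (upper Bessel bound).} Given any finitely supported sequence $(c_m)$, set $h := \sum_m c_m b_m$. By linearity of $\mathcal{P}_0^1$, Parseval's identity for the orthonormal basis $(b_m)$, and the norm estimate (\ref{eq:150618}),
\[
\Big\|\sum_m c_m (0*_T b_m)\Big\|_2 \;=\; \|\mathcal{P}_0^1 h\|_2 \;\leq\; \|\mathcal{P}_0^1\|\,\|h\|_2 \;\leq\; \frac{\Lambda}{1-\Lambda}\,\Big(\sum_m |c_m|^2\Big)^{1/2}.
\]
(This can also be read off from Proposition~\ref{prop:160621}.)

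\textbf{Step 2 (lower Riesz bound).} Under the standing hypothesis $\alpha_n(t) \equiv \pm\Lambda$, the inequality (\ref{eq:150518}) established in the proof of Proposition~\ref{prop:160618} reads
\[
\|b\|_2 \;\leq\; \frac{1+\Lambda}{\Lambda}\,\|\mathcal{P}_0^1 b\|_2, \qquad \forall\, b \in \mathcal{L}^2(I).
\]
Applying this with $b = h$ and again invoking Parseval gives
\[
\Big(\sum_m |c_m|^2\Big)^{1/2} \;=\; \|h\|_2 \;\leq\; \frac{1+\Lambda}{\Lambda}\,\Big\|\sum_m c_m (0*_T b_m)\Big\|_2.
\]
Combining the two inequalities yields Riesz bounds $A = (\Lambda/(1+\Lambda))^2$ and $B = (\Lambda/(1-\Lambda))^2$, so $(0*_T b_m)$ is a Riesz sequence in $\mathcal{L}^2(I)$.

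\textbf{Main obstacle.} There is no real obstacle: every ingredient has already been prepared. What must be checked carefully is that the closed-range bound (\ref{eq:150518}) from Proposition~\ref{prop:160618}, which is written for general $p$, indeed applies at $p=2$, and that the hypothesis $\alpha_n \equiv \pm\Lambda \neq 0$ is precisely what legitimizes this bound. A more conceptual alternative is simply to invoke Proposition~\ref{prop:160617} (injectivity) together with Proposition~\ref{prop:160618} (closed range): any injective bounded operator with closed range is a topological isomorphism onto its range, and thus carries an orthonormal basis to a Riesz basis of the range, i.e., to a Riesz sequence.
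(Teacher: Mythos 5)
Your proof is correct and follows essentially the same route as the paper, which simply cites the argument of Proposition 2.21 of \cite{10} together with inequality (\ref{eq:150518}): upper Riesz bound from the boundedness of $\mathcal{P}_0^1$ plus Parseval, lower bound from (\ref{eq:150518}) applied to $h=\sum_m c_m b_m$. You merely make explicit what the paper delegates to the external reference, and your stated bounds $A=(\Lambda/(1+\Lambda))^2$, $B=(\Lambda/(1-\Lambda))^2$ are the correct quantitative outcome.
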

\begin{proof}
The proof follows arguments similar to those given for Proposition 2.21 of \cite{10}, using inequality (\ref{eq:150518}).
\end{proof}

\begin{proposition}\label{prop:160623}
If $(b_m)$ is an orthonormal basis of $\mathcal{L}^2(I)$ and $\Lambda < 1/2$ then $(b_m-0*_Tb_m)$ is a Riesz basis of $\mathcal{L}^2(I)$.
\end{proposition}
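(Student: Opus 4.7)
The plan is to recognize $b_m - 0*_T b_m$ as the image of the orthonormal basis $(b_m)$ under the operator $I - \mathcal{P}_0^1$ and then invoke the classical characterization of Riesz bases as images of orthonormal bases under bounded invertible operators on a separable Hilbert space.

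First I would observe that the hypothesis $\Lambda < 1/2$ together with inequality (\ref{eq:100519}) in Remark \ref{rem:110519} yields
\[
\|\mathcal{P}_0^1\| \leq \frac{\Lambda}{1-\Lambda} < 1,
\]
so the Neumann series $\sum_{k\geq 0} (\mathcal{P}_0^1)^k$ converges in the operator norm on $\mathcal{L}^2(I)$. Consequently $I - \mathcal{P}_0^1$ is a bounded linear bijection of $\mathcal{L}^2(I)$ onto itself with bounded inverse; that is, $I - \mathcal{P}_0^1$ is a topological isomorphism of $\mathcal{L}^2(I)$. (This also recovers, and strengthens, the conclusions of Theorem \ref{prop:16062} and Proposition \ref{prop:160620} in the Hilbert setting.)

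Next I would write, for every $m$,
\[
b_m - 0*_T b_m = (I - \mathcal{P}_0^1)(b_m),
\]
which follows directly from the definition (\ref{eq:150615}) of $\mathcal{P}_0^1$. Since $(b_m)$ is an orthonormal basis of $\mathcal{L}^2(I)$, and since a sequence in a separable Hilbert space is a Riesz basis if and only if it is the image of an orthonormal basis under a bounded invertible operator, the sequence $(b_m - 0*_T b_m)$ is a Riesz basis of $\mathcal{L}^2(I)$.

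I do not expect a real obstacle here: once Remark \ref{rem:110519} is in hand, the whole argument reduces to the Neumann series and the standard characterization of Riesz bases. The only mild subtlety worth stating explicitly is that the Riesz bounds can be made quantitative, namely
\[
(1-\|\mathcal{P}_0^1\|)\,\|c\|_{\ell^2} \;\leq\; \Bigl\| \sum_m c_m (b_m - 0*_T b_m) \Bigr\|_2 \;\leq\; (1+\|\mathcal{P}_0^1\|)\,\|c\|_{\ell^2},
\]
for every finitely supported sequence $(c_m)$, which follows from $\|I\pm\mathcal{P}_0^1\|\le 1+\|\mathcal{P}_0^1\|$ and $\|(I-\mathcal{P}_0^1)^{-1}\|\le (1-\|\mathcal{P}_0^1\|)^{-1}$ together with Parseval's identity for $(b_m)$. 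This confirms that $(b_m - 0*_T b_m)$ is a Riesz basis in the quantitative sense and completes the proof.
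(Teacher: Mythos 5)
Your argument is correct and follows essentially the same route as the paper: both rest on $\|\mathcal{P}_0^1\|\leq \Lambda/(1-\Lambda)<1$ (Remark \ref{rem:110519}), the invertibility of $I-\mathcal{P}_0^1$, and the characterization of Riesz bases as images of orthonormal bases under bounded invertible operators. Your explicit Neumann-series justification of surjectivity and the quantitative Riesz bounds are welcome elaborations, but they do not change the substance of the paper's proof.
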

\begin{proof}
In this case $\Vert \mathcal{P}_0^1 \Vert < 1$ (Remark \ref{rem:110519}) and thus $I- \mathcal{P}_0^1$ is a topological isomorphism on its range. Consequently $(b_m-0*_Tb_m)$  is equivalent to an orthonormal basis and thus a Riesz basis.
\end{proof}


\subsection{Properties of  $\mathcal{P}_0^2$}
Let us consider now the properties of the operator $$\mathcal{P}_0^2(f)=f*_T0$$ in the general case $E=\mathcal{L}^p(I)$, $1\leq p \leq \infty$.

If $f*_T0=0$ then (\ref{eq:140522}) implies that $f=0$. Hence, the operator $\mathcal{P}_0^2$ is injective. By the same inequality,
\[\Vert f \Vert_p - \Vert f*_T0 \Vert_p \leq \Lambda  \Vert f*_T0 \Vert_p, \]
and
\begin{equation}\label{eq:140523}
\Vert f \Vert_p  \leq (1+ \Lambda)  \Vert f*_T0 \Vert_p.
\end{equation}
The latter inequality implies that  $\mathcal{P}_0^2$ has closed range. (See also the proof of Proposition \ref{prop:160618}.)

Thus the construction of fractal bases via the operator $I-\mathcal{P}_0^1$ is applicable to $\mathcal{P}_0^2$ without any hypotheses on the scale functions. For instance, we have the following result.
\begin{proposition} If $(f_m)$ is a Schauder basis of $E$, then $(f_m*_T0)$ is a Schauder sequence in $E$.
\end{proposition}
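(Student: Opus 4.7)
The plan is to argue that $\mathcal{P}_0^2$ is a topological isomorphism onto its (closed) range, and then invoke the standard fact that topological isomorphisms preserve Schauder bases, exactly as was done for $\mathcal{P}_0^1$ in Proposition \ref{prop:160619}.

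First I would collect the three properties of $\mathcal{P}_0^2$ already established earlier in the section: $\mathcal{P}_0^2$ is linear, it is bounded with $\Vert \mathcal{P}_0^2\Vert \leq \frac{1}{1-\Lambda}$ by (\ref{eq:150622}), and it is injective with closed range as a consequence of the lower estimate (\ref{eq:140523}), namely $\Vert f\Vert_p \leq (1+\Lambda)\Vert f*_T 0\Vert_p$. Crucially, unlike the situation with $\mathcal{P}_0^1$, no hypothesis on the scale functions is needed for these three facts.

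Next I would deduce that $\mathcal{P}_0^2:E \to \mathrm{range}(\mathcal{P}_0^2)$ is a topological isomorphism. Indeed, since $\mathrm{range}(\mathcal{P}_0^2)$ is closed in the Banach space $E$, it is itself a Banach space; $\mathcal{P}_0^2$ is a continuous linear bijection from $E$ onto this Banach space, and the lower bound from (\ref{eq:140523}) directly furnishes the continuity of its inverse (alternatively, one can invoke the bounded inverse theorem).

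Finally, because topological isomorphisms carry Schauder bases to Schauder bases, the image $(f_m *_T 0)$ of a Schauder basis $(f_m)$ of $E$ under $\mathcal{P}_0^2$ is a Schauder basis of its closed linear span in $E$, i.e., a Schauder sequence of $E$ in the sense of the definition given just before Proposition \ref{prop:160619}. I do not anticipate any real obstacle: the whole content of the argument is the isomorphism-onto-its-range statement, and its two ingredients (boundedness and the lower estimate) have already been recorded in the preceding paragraphs of this subsection.
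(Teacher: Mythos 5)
Your argument is correct and matches the paper's intended reasoning: the paper presents this proposition as an immediate consequence of the preceding paragraph, where injectivity and closed range of $\mathcal{P}_0^2$ are derived from the lower bound $\Vert f\Vert_p \leq (1+\Lambda)\Vert f*_T 0\Vert_p$, so that $\mathcal{P}_0^2$ is a topological isomorphism onto its range and hence carries bases to bases, exactly as in Proposition \ref{prop:160619} for $\mathcal{P}_0^1$. Nothing essential differs.
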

\begin{proposition} \label{prop:140526}
If $(f_m)$ is a Schauder basis of $E$ and $ \Lambda  <1/2$ then $(f_m*_T0)$ is a Schauder basis. If  $(f_m)$ is bounded then the fractal basis $(f_m*_T 0)$ is also bounded.
\end{proposition}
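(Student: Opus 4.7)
The plan is to show that under the hypothesis $\Lambda < 1/2$, the operator $\mathcal{P}_0^2$ is in fact a topological isomorphism of $E$ onto itself, not merely onto its closed range. Since topological isomorphisms send Schauder bases to Schauder bases (by the same reasoning used in Proposition \ref{prop:160620}), this will yield that $(\mathcal{P}_0^2(f_m))=(f_m \fc 0)$ is a Schauder basis of $E$.

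The main step, and what I expect to be the critical observation, is to exploit estimate (\ref{eq:150624}), namely $\|\mathcal{P}_0^2 - I\| \leq \frac{\Lambda}{1-\Lambda}$. Under the assumption $\Lambda < 1/2$, the right-hand side is strictly less than $1$, so writing $\mathcal{P}_0^2 = I-(I-\mathcal{P}_0^2)$ and invoking the Neumann series shows that $\mathcal{P}_0^2$ is invertible with bounded inverse on $E$. Thus $\mathcal{P}_0^2$ is a topological automorphism of $E$, which sends the Schauder basis $(f_m)$ to a Schauder basis $(f_m \fc 0)$ of the same space $E$. (Alternatively, one may combine the injectivity of $\mathcal{P}_0^2$ and its closed range, already established via (\ref{eq:140523}) in the text preceding the statement, with surjectivity deduced from the Neumann series argument.)

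For the second assertion, suppose $(f_m)$ is bounded, so that there exist constants $0<k\leq K$ with $k\leq \norm{f_m}_p \leq K$ for all $m$. The upper bound on $\norm{f_m \fc 0}_p$ follows directly from the continuity of $\mathcal{P}_0^2$ and estimate (\ref{eq:150622}):
\[
\norm{f_m \fc 0}_p = \norm{\mathcal{P}_0^2(f_m)}_p \leq \frac{1}{1-\Lambda}\norm{f_m}_p \leq \frac{K}{1-\Lambda}.
\]
For the lower bound, I would apply inequality (\ref{eq:140523}), which gives $\norm{f_m}_p \leq (1+\Lambda)\norm{f_m \fc 0}_p$, whence
\[
\norm{f_m \fc 0}_p \geq \frac{\norm{f_m}_p}{1+\Lambda} \geq \frac{k}{1+\Lambda},
\]
completing the boundedness statement. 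No further obstacle is anticipated beyond correctly invoking the Neumann series to upgrade closedness of range to full surjectivity; the rest is routine from the previously established norm estimates.
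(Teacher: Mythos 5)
Your proposal is correct and follows essentially the same route as the paper: the paper likewise deduces from estimate (\ref{eq:150624}) that $\Vert I-\mathcal{P}_0^2\Vert<1$ when $\Lambda<1/2$, concludes that $\mathcal{P}_0^2$ is a topological isomorphism preserving bases (you merely make the Neumann series explicit), and obtains the boundedness of $(f_m\fc 0)$ from the two-sided estimate $\frac{\Vert f\Vert_p}{1+\Lambda}\leq\Vert f\fc 0\Vert_p\leq\Vert\mathcal{P}_0^2\Vert\,\Vert f\Vert_p$ exactly as you do via (\ref{eq:140523}) and (\ref{eq:150622}).
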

\begin{proof}
By (\ref{eq:150624}), $\Vert I-  \mathcal{P}_0^2 \Vert <1$ and therefore $\mathcal{P}_0^2$ is a topological isomorphism which preserves bases. Using (\ref{eq:140523}), the following inequalities
\[\frac{\Vert f \Vert_p }{1+\Lambda} \leq \Vert f*_T0 \Vert_p \leq \Vert \mathcal{P}_0^2 \Vert \Vert f \Vert_p\]
imply the boundedness of the new basis.
\end{proof}

We now refine Propositions \ref{prop:160621} and \ref{prop:160622} in the case $p=2.$

\begin{proposition}
If $(f_m)$ is a Bessel sequence in $\mathcal{L}^2(I)$ then $(f_m *_T 0)$ is also a Bessel sequence.
\end{proposition}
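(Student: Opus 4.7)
The plan is to recognize this as a direct instance of the standard fact that bounded linear operators on a Hilbert space preserve Bessel sequences. Recall from the discussion preceding Proposition \ref{prop:140526} that $\cP_0^2$ is a linear and bounded operator on $\cL^2(I)$ satisfying $\norm{\cP_0^2} \leq 1/(1-\Lambda)$, and that by definition $f_m *_T 0 = \cP_0^2(f_m)$. So the problem reduces to showing that $(\cP_0^2(f_m))$ is Bessel whenever $(f_m)$ is.

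First I would pass to the Hilbert-space adjoint. Since $\cP_0^2 \in \cB(\cL^2(I))$, its adjoint $(\cP_0^2)^*$ exists and satisfies $\norm{(\cP_0^2)^*} = \norm{\cP_0^2} \leq 1/(1-\Lambda)$. For any $g \in \cL^2(I)$, moving the operator across the inner product gives
\[
\inn{g}{f_m *_T 0} \;=\; \inn{g}{\cP_0^2(f_m)} \;=\; \inn{(\cP_0^2)^* g}{f_m}.
\]

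Next, I would apply the Bessel bound $B$ of $(f_m)$ to the vector $(\cP_0^2)^* g$ and chain the estimates:
\[
\sum_m |\inn{g}{f_m *_T 0}|^2 \;=\; \sum_m |\inn{(\cP_0^2)^* g}{f_m}|^2 \;\leq\; B\,\norm{(\cP_0^2)^* g}_2^2 \;\leq\; \frac{B}{(1-\Lambda)^2}\,\norm{g}_2^2.
\]
This shows that $(f_m *_T 0)$ is Bessel with bound $B/(1-\Lambda)^2$.

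There is no genuine obstacle here; the argument is essentially formal once the boundedness of $\cP_0^2$ on the Hilbert space $\cL^2(I)$ is in hand. The only point worth verifying is that we are indeed in the Hilbert setting $p=2$, so that the adjoint manoeuvre is legitimate and the operator norm bound from the $\cL^p$-theory specialises correctly.
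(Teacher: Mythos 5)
Your argument is exactly the paper's proof: both pass the operator $\cP_0^2$ across the inner product to its adjoint, apply the Bessel bound of $(f_m)$ to $(\cP_0^2)^* g$, and conclude with the bound $B\Vert (\cP_0^2)^*\Vert^2 \Vert g\Vert_2^2$. Your version just makes the numerical constant $B/(1-\Lambda)^2$ explicit via (\ref{eq:150622}), which is a harmless refinement.
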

\begin{proof}
Suppose $(f_m)$ is a Bessel sequence. By definition, there exists a constant $B>0$ such that for any $g \in E$
\[
\sum_m \vert \inn{g}{f_m} \vert^2 \leq B \Vert g \Vert^2_2.
\]
For the family $(f_m *_T 0)$ and an $f \in E$ we have
\[
\sum_m \vert \inn{f}{f_m *_T 0}\vert^2 = \sum \vert \inn{f}{\mathcal{P}_0^2(f_m)} \vert^2 = \sum_m \vert \inn{(\mathcal{P}_0^2)^*(f)}{f_m}\vert^2,
\]
where $(\mathcal{P}_0^2)^*$ is the adjoint operator of $\mathcal{P}_0^2.$ Applying the Bessel property of $(f_m)$ yields
\[
\sum_m \vert \inn{f}{f_m *_T 0}\vert^2 \leq B \Vert (\mathcal{P}_0^2)^* \Vert^2 \Vert f \Vert^2_2,
\]
proving the statement.
\end{proof}

\begin{theorem}
If $(f_m)$ is a Riesz sequence in $\mathcal{L}^2(I)$, then $(f_m *_T 0)$ is also a Riesz sequence.
\end{theorem}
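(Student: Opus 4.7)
The plan is to exploit the fact that $\mathcal{P}_0^2$ is a topological isomorphism onto its range, so it transports Riesz sequences to Riesz sequences. Concretely, by the definition of a Riesz sequence, there exist constants $0 < A \leq B$ such that for every finite scalar sequence $(c_m)$,
\[
A\sum_m |c_m|^2 \;\leq\; \Big\|\sum_m c_m f_m\Big\|_2^2 \;\leq\; B\sum_m |c_m|^2.
\]
The goal is to produce analogous constants $0 < A' \leq B'$ for the sequence $(f_m\fc 0) = (\mathcal{P}_0^2 f_m)$.

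First I would handle the upper Riesz bound. By the linearity of $\mathcal{P}_0^2$,
\[
\Big\|\sum_m c_m (f_m \fc 0)\Big\|_2 = \Big\|\mathcal{P}_0^2\Big(\sum_m c_m f_m\Big)\Big\|_2 \leq \|\mathcal{P}_0^2\|\, \Big\|\sum_m c_m f_m\Big\|_2,
\]
and then the bound $\|\mathcal{P}_0^2\|\leq \tfrac{1}{1-\Lambda}$ from inequality (\ref{eq:150622}) combined with the upper Riesz bound for $(f_m)$ yields
\[
\Big\|\sum_m c_m (f_m \fc 0)\Big\|_2^2 \leq \frac{B}{(1-\Lambda)^2}\sum_m |c_m|^2,
\]
so one can take $B' := B/(1-\Lambda)^2$.

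Next I would establish the lower Riesz bound using the reverse inequality (\ref{eq:140523}), which asserts that $\|f\|_2\leq (1+\Lambda)\|\mathcal{P}_0^2 f\|_2$ for every $f\in \mathcal{L}^2(I)$. Applied to $f=\sum_m c_m f_m$, this gives
\[
\Big\|\sum_m c_m f_m\Big\|_2 \leq (1+\Lambda)\,\Big\|\sum_m c_m (f_m \fc 0)\Big\|_2,
\]
and combining with the lower Riesz bound for $(f_m)$ produces
\[
\frac{A}{(1+\Lambda)^2}\sum_m |c_m|^2 \leq \Big\|\sum_m c_m (f_m \fc 0)\Big\|_2^2,
\]
so $A' := A/(1+\Lambda)^2$ works. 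Together these two inequalities verify that $(f_m\fc 0)$ is a Riesz sequence in $\mathcal{L}^2(I)$.

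There is essentially no hard step here; the only thing to observe is that both building blocks are already in hand — the operator norm bound (\ref{eq:150622}) supplies the upper estimate, and the reverse coercivity (\ref{eq:140523}), which rested on the fixed point equation (\ref{fixp}), supplies the lower one. The result is in fact a special case of the general principle that Riesz sequences are preserved under topological isomorphisms, and the proof above is precisely the quantitative form of that principle applied to $\mathcal{P}_0^2$.
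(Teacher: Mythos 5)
Your proof is correct and follows the same route as the paper: the paper simply invokes that $\mathcal{P}_0^2$ is a topological isomorphism from $[f_m]$ onto $[f_m \fc 0]$ and hence preserves Riesz sequences, while you unpack that statement quantitatively, using $\Vert\mathcal{P}_0^2\Vert\leq \frac{1}{1-\Lambda}$ for the upper bound and (\ref{eq:140523}) for the lower bound, obtaining the explicit constants $A/(1+\Lambda)^2$ and $B/(1-\Lambda)^2$. This is a faithful (and somewhat more informative) version of the paper's argument.
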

\begin{proof}
If $(f_m)$ is a Riesz sequence then it is a basis for $[f_m]$. The operator $\mathcal{P}_0^2$ is a topological isomorphism from $[f_m]$ onto $[f_m *_T 0],$ and thus it preserves bases.
\end{proof}

As a consequence of Proposition \ref{prop:140526} we obtain the next result.

\begin{corollary}  Suppose $(f_m)$ is an  orthonormal basis of $\mathcal{L}^2(I)$ and $\Lambda <1/2$. Then $(f_m*_T0)$  is a bounded Riesz basis of $\mathcal{L}^2(I)$.
\end{corollary}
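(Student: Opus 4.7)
The plan is to assemble the corollary from two already–proved ingredients: Proposition \ref{prop:140526} supplies that $(f_m *_T 0)$ is a bounded Schauder basis of $\mathcal{L}^2(I)$, and we upgrade this to a Riesz basis by exhibiting $\mathcal{P}_0^2$ as a topological isomorphism of the ambient Hilbert space.

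First, observe that an orthonormal basis $(f_m)$ of $\mathcal{L}^2(I)$ is in particular a Schauder basis and is bounded (indeed, $\norm{f_m}_2 = 1$ for all $m$). The hypothesis $\Lambda < 1/2$ therefore places us exactly in the setting of Proposition \ref{prop:140526}, from which I obtain immediately that $(f_m *_T 0)$ is a bounded Schauder basis of $\mathcal{L}^2(I)$.

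Next, I use the key estimate (\ref{eq:150624}), namely $\norm{\mathcal{P}_0^2 - I} \leq \Lambda/(1-\Lambda)$. Since $\Lambda < 1/2$ this bound is strictly less than $1$, so $\mathcal{P}_0^2 = I - (I - \mathcal{P}_0^2)$ is invertible on $\mathcal{L}^2(I)$ via the Neumann series; in other words, $\mathcal{P}_0^2$ is a topological isomorphism of $\mathcal{L}^2(I)$ onto itself. Because a Riesz basis of a Hilbert space is by definition the image of an orthonormal basis under such a topological isomorphism, the system $(\mathcal{P}_0^2 f_m) = (f_m *_T 0)$ is a Riesz basis of $\mathcal{L}^2(I)$. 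Combined with the boundedness already recorded, this gives exactly the statement of the corollary.

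Thus there is essentially no obstacle: the proof consists of checking that the hypotheses of Proposition \ref{prop:140526} apply to orthonormal bases (trivial) and invoking the Neumann–series argument implicit in the proof of that proposition to recognise $\mathcal{P}_0^2$ as a global isomorphism, so that the Schauder basis one obtains is in fact a Riesz basis. The only point that requires a moment of care is to note that the isomorphism acts on all of $\mathcal{L}^2(I)$ (not merely on the closed span of $(f_m *_T 0)$), which is what permits the passage from \emph{Riesz sequence} to genuine \emph{Riesz basis}; this follows from $\norm{I - \mathcal{P}_0^2} < 1$ rather than from a spanning argument on the image.
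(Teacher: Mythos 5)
Your proposal is correct and follows essentially the same route as the paper: the corollary is stated there precisely as a consequence of Proposition \ref{prop:140526}, whose proof already establishes via $\Vert I-\mathcal{P}_0^2\Vert<1$ that $\mathcal{P}_0^2$ is a topological isomorphism of all of $\mathcal{L}^2(I)$, so that the image of the orthonormal basis is a (bounded) Riesz basis. Your added remark that the isomorphism acts on the whole space, which is what yields a Riesz \emph{basis} rather than merely a Riesz sequence, is exactly the right point of care and is consistent with the paper's argument.
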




\begin{figure}[h]
\includegraphics[angle=0, width=1\textwidth]{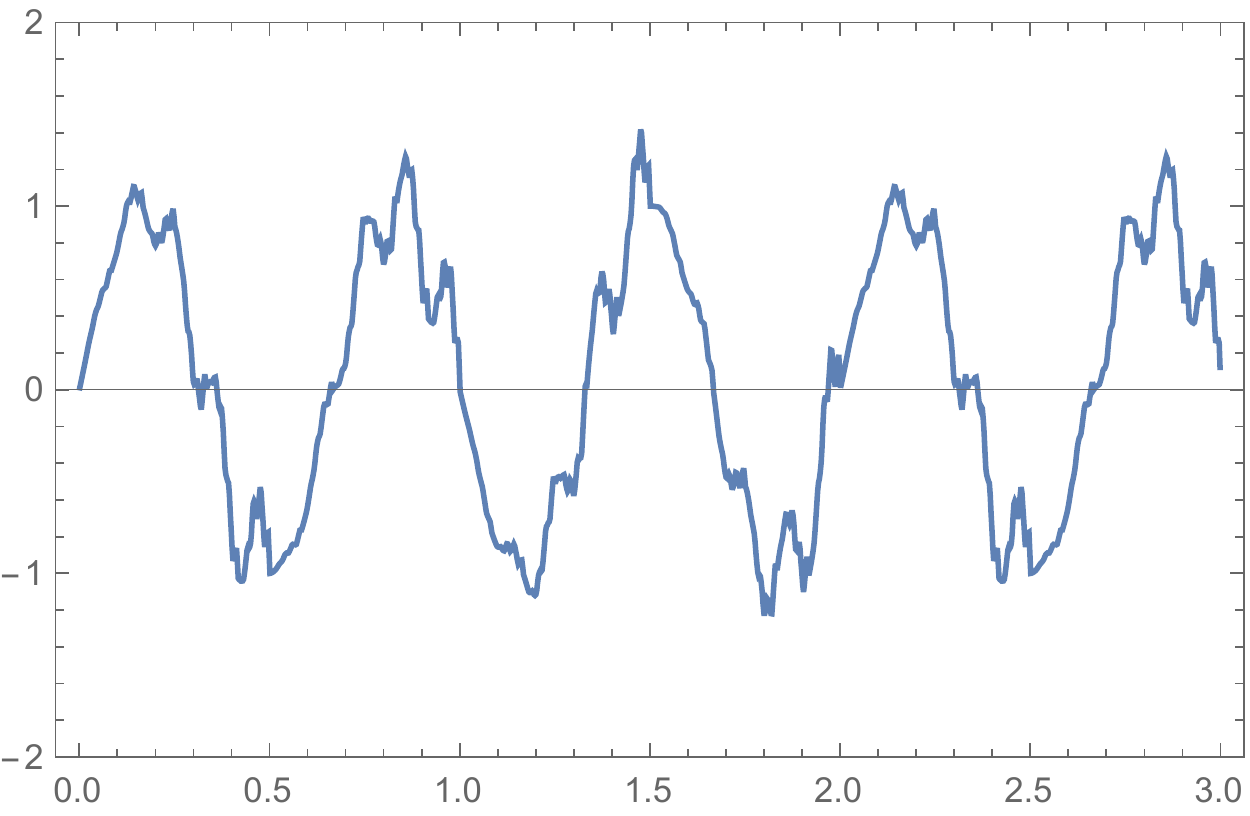} \caption{\small Graph of the fractal convolution
 $\sin(3 \pi x)*_T0$, with respect to a uniform  partition of the interval $I=[0,3]$ with $N=6$,
and  scale functions $\alpha_n(x)=x/8$ for $n=1, 2, \ldots, 6$.}
\end{figure}

\section{Fractal Convolutions and Frames}

In this section, we investigate the relation between fractal convolution and frames in separable Hilbert spaces.

\begin{definition} Let $H$ be a separable Hilbert space. A sequence $(f_m)\subset H$ is called a \emph{frame for $H$} if there exist positive constants $A, B$, called \emph{frame bounds}, such that
\[
A \Vert f \Vert^2 \leq \sum_m \vert \inn{f}{f_m} \vert \leq B \Vert f \Vert^2, \quad \forall\,f\in H.
\]
\end{definition}
\noindent
The next result can be found in reference \cite{20}.

\begin{theorem}\label{th:140528}
Let  $(f_m)$ be a frame with frame bounds $A,B> 0$. Any family $(g_m)$ such that
\[
R=\sum_m \Vert f_m -g_m \Vert^2 <A,
\]
constitutes a frame with frame bounds $A\left(1- \sqrt{\frac{R}{A}}\right)^2$ and  $B\left(1+ \sqrt{\frac{R}{B}}\right)^2.$
\end{theorem}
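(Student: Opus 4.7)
The plan is to estimate the frame sum of the perturbed family $(g_m)$ by comparing it term-by-term with that of $(f_m)$ and then apply the triangle inequality in $\ell^2(\N)$. Concretely, for arbitrary $f\in H$, I would write $\inn{f}{g_m} = \inn{f}{f_m} + \inn{f}{g_m-f_m}$ and separately control the two resulting sums, so that the proof reduces to handling the perturbation piece $\sum_m|\inn{f}{g_m-f_m}|^2$.

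The perturbation piece is dispatched by Cauchy--Schwarz applied term-by-term, which gives $|\inn{f}{g_m-f_m}|^2 \leq \norm{f}^2\,\norm{g_m-f_m}^2$; summing and taking square roots yields
\[
\left(\sum_m|\inn{f}{g_m-f_m}|^2\right)^{1/2} \leq \sqrt{R}\,\norm{f}.
\]
Combining this with the frame inequality for $(f_m)$ via Minkowski's inequality in $\ell^2$ gives the upper bound
\[
\left(\sum_m|\inn{f}{g_m}|^2\right)^{1/2} \leq \sqrt{B}\,\norm{f} + \sqrt{R}\,\norm{f},
\]
and squaring produces the claimed bound $B\bigl(1+\sqrt{R/B}\bigr)^2\,\norm{f}^2$.

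For the lower bound I would use the reverse triangle inequality in $\ell^2$:
\[
\left(\sum_m|\inn{f}{g_m}|^2\right)^{1/2} \geq \sqrt{A}\,\norm{f}-\sqrt{R}\,\norm{f} = (\sqrt{A}-\sqrt{R})\,\norm{f}.
\]
The hypothesis $R<A$ is precisely what ensures the right-hand side is strictly positive, so that squaring yields a genuine lower frame bound $A\bigl(1-\sqrt{R/A}\bigr)^2\,\norm{f}^2$. The only subtle point is the uniform control of the perturbation sum; once one observes that $(\inn{f}{g_m-f_m})_m$ is dominated coordinatewise (up to the factor $\norm{f}$) by the $\ell^2$ sequence $(\norm{g_m-f_m})_m$, no deeper estimate is needed, and the entire argument is essentially an exercise in Minkowski's inequality.
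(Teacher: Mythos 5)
Your proof is correct and is essentially the canonical argument: the paper does not prove this theorem itself but imports it from Christensen \cite{20}, whose proof is exactly your decomposition --- show via Cauchy--Schwarz that $(f_m-g_m)$ is a Bessel sequence with bound $R$, then apply Minkowski and the reverse triangle inequality in $\ell^2$, with the hypothesis $R<A$ guaranteeing that $\sqrt{A}-\sqrt{R}>0$ so that squaring the lower estimate is legitimate. There are no gaps.
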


\begin{theorem}
Let  $(f_m)$ be a frame in a separable Hilbert space H with frame bounds $A, B>0$. Choose a sequence of scale functions $\{\alpha_n^m\}$ in the definition of $(f_m*_T0)$ such that
\[
R=\sum_m \left(\frac{\Lambda_m }{1-\Lambda_m }\right)^2 \Vert f_m \Vert_2^2 <A,
\]
where $\Lambda_m=\max\{\Vert \alpha_n^m \Vert_\infty : n\in \N_N\}$. Then $(f_m*_T0)$ is a frame in $H$ with frame bounds  $A\left(1- \sqrt{\frac{R}{A}}\right)^2$ and  $B\left(1+ \sqrt{\frac{R}{B}}\right)^2.$
\end{theorem}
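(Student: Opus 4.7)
The plan is to apply the perturbation result (Theorem~\ref{th:140528}) directly, taking $g_m := f_m \fc 0$ and identifying the quantity $R$ in the hypothesis of that theorem with the $R$ appearing in our statement. This reduces the problem to a single estimate: bounding $\sum_m \|f_m - f_m \fc 0\|_2^2$ from above by $R$.

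First, I would apply the bound on the partial fractal convolution operator $\mathcal{P}_0^2$ to each frame element. Specifically, inequality (\ref{eq5}) from Theorem~\ref{prop3.2}, applied to the pair $(f_m, 0)$ with the scale vector $\alpha^m = (\alpha_n^m)_{n=1}^N$ of contraction bound $\Lambda_m$, yields
\[
\|(f_m \fc 0) - f_m\|_2 \leq \frac{\Lambda_m}{1 - \Lambda_m}\, \|f_m - 0\|_2 = \frac{\Lambda_m}{1 - \Lambda_m}\, \|f_m\|_2.
\]
Note that the use of (\ref{eq5}) is legitimate here because $p=2$, so we are in the regime $1 \leq p \leq \infty$ and no modification $\Lambda \to \Lambda^p$ is needed.

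Next I would square and sum over $m$ to obtain
\[
\sum_m \|f_m - (f_m \fc 0)\|_2^2 \;\leq\; \sum_m \left(\frac{\Lambda_m}{1-\Lambda_m}\right)^2 \|f_m\|_2^2 \;=\; R \;<\; A,
\]
which is precisely the hypothesis of Theorem~\ref{th:140528} with $g_m := f_m \fc 0$. Invoking that theorem then delivers the conclusion that $(f_m \fc 0)$ is a frame for $H$ with the asserted bounds $A(1-\sqrt{R/A})^2$ and $B(1+\sqrt{R/B})^2$.

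The proof is essentially a one-line reduction, and I do not foresee a genuine obstacle: the only point that deserves a moment of care is to verify that the scale factor $\Lambda_m$ tied to the $m$-th frame element matches the $\Lambda$ that governs the estimate (\ref{eq5}) when forming $f_m \fc 0$, so that the per-term bound $\Lambda_m/(1-\Lambda_m)$ is legitimate. Granted this bookkeeping, everything else is a direct appeal to Theorems~\ref{prop3.2} and~\ref{th:140528}.
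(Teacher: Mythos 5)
Your proposal is correct and follows exactly the paper's own argument: apply inequality (\ref{eq5}) with $b=0$ to get the per-term estimate $\Vert f_m \fc 0 - f_m\Vert_2 \le \frac{\Lambda_m}{1-\Lambda_m}\Vert f_m\Vert_2$, sum the squares to bound the perturbation by $R<A$, and invoke the frame-perturbation result of Theorem~\ref{th:140528}. The only difference is that you spell out the squaring-and-summing step and the bookkeeping of the per-element scale bound $\Lambda_m$, which the paper leaves implicit.
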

\begin{proof}
The statement follows from inequality (\ref{eq5})
\[\Vert f_m*_T0 -f_m \Vert_2 \leq \frac{\Lambda_m  }{1-\Lambda_m  } \Vert f_m \Vert_2\]
and the previous theorem.
\end{proof}

\begin{remark} If $(p_m)$ is an orthonormal basis of $H$, it is a frame in $H$ with frame bounds $A=B=1$. In this case, the fractal analogue would be a frame in $H$ with frame bounds $(1-\sqrt{R})^2$ and  $(1+\sqrt{R})^2$. Thus, it suffices to choose suitable $\Lambda_m \in \mathcal{O}(1/m)$ in order to construct a fractal frame in $H$.
\end{remark}

\begin{remark} The union of an orthonormal basis of $H$ and a Bessel sequence in $H$ is a frame. Hence, $(p_m)\cup (p_m*_T0)$ is a frame in $H$ and this fact enables the
simultaneous use of classical as well as fractal maps in order to span and approximate a given function in $H$.
\end{remark}

\begin{remark} By (\ref{eq:140527}) and Theorem \ref{th:140528} it suffices to choose the $\Lambda_m$ such that
\[\sum_m \left(\frac{\Vert b_m \Vert_2}{1-\Lambda_m }\right)^2 <A\]
in order to obtain a fractal frame $(0*_T b_m)$ from a frame $(b_m)$ in $H$.
\end{remark}




\begin{thebibliography}{00}

\bibitem{1} B.B. Mandelbrot, J.V. Ness. Fractional Brownian motion, fractional noises and applications. {\it SIAM Review} {\bf 10} (1968) 422--437.

\bibitem{2} B.B. Mandelbrot. Noises with an $1/f$ spectrum, a bridge between direct current and white noise. {\it IEEE Trans. Information Theory } {\bf IT-13} (1967) 289--298.




\bibitem {3} M. F. Barnsley. Fractal functions and
interpolation. {\it Constr. Approx.} {\bf 2} (1986) 303--329.

\bibitem{4}  S. Chen. The non-differentiability of a class of fractal interpolation functions. {\it Acta Math. Sci.} {\bf 19}(4) (1999) 425--430.

 \bibitem{5}  D.P. Hardin, P.R. Massopust. The capacity for a class of  fractal  functions. {\it Comm. Math. Phys.} {\bf 105} (1986) 455--460.

 \bibitem{6} J. Hutchinson. Fractals and Self-Similarity. {\it Indiana Univ. Math. J.} {\bf 30} (1981) 713--747.

 \bibitem{7} P.R.  Massopust. {\it Interpolation and Approximation with Splines and Fractals} (Oxford University Press, NY, 2010).

 \bibitem{8} P.R. Massopust. {\it Fractal Functions, Fractal Surfaces, and Wavelets}. 2nd. ed. (Academic Press, 2016).

 \bibitem{9} S. Zhen. H\"older property of fractal interpolation functions. {\it Approx. Theor. Appl.} {\bf 8}(4) (1992) 45--57.

 \bibitem{10} M.A. Navascu\'es. Fractal bases of $L^p$ spaces. {\it Fractals} {\bf 20}(2) (2012) 141--148.

 \bibitem{11} M.A. Navascu\'es, A.K.B. Chand. Fundamental sets of fractal functions. {\it Acta Appl. Math.} {\bf 100} (2008) 247--261.

 \bibitem{12}  M.A. Navascu\'es. Fractal functions of discontinuous approximation. {\it J. of Basic and Applied Sci. (Math.)} {\bf 10} (2014) 173--176.

\bibitem{13} S. Rolewicz. {\it Metric Linear Spaces} (Kluwer Academic Publishers Group, Poland, 1985).

\bibitem{14} M. Bajraktarevi{\'c}. Sur une \'equation fonctionelle. {\it Glasnik Mat.-Fiz. Astr. Ser. II} {\bf 12} (1956) 201--205.

\bibitem{15} A. H. Read. The solution of a functional equation. {\it Proc. Roy. Soc. Edinburgh Sect. A} {\bf 63} (1951--52) 336--345.

\bibitem{16} P.R. Massopust. {\it Local fractal functions and function spaces}. (Springer Proceedings in Mathematics \& Statistics: Fractals, Wavelets, and their Applications 2014; Vol. 92, 245--270).

    \bibitem{17} E. Zeidler,  {\it Nonlinear Analysis and its Applications, (I) Fixed Point Theorems} (Springer Verlag, 1985).

 \bibitem{18} J. Dugungji. {\it Topology} (Allyn and Bacon Inc., Boston, 1966).

 \bibitem{19} I. Singer. {\it Bases in Banach Spaces I} (Springer Verlag, New York, 1970).

 \bibitem{20} O. Christensen. Frame perturbations. {\it Proc. Amer. Math. Soc.} {\bf 123}(4) (1995) 1217--1220.















\end{thebibliography}
\end{document}